\documentclass[10pt]{amsart}

\newtheorem{lem}{Lemma}[section]
\newtheorem{thm}{Theorem}[section]

\newtheorem{obs}{Remark}[section]

\usepackage{amssymb}
\usepackage{amsmath}
\usepackage{amsfonts}
\usepackage{graphicx}

\numberwithin{equation}{section}

\usepackage[T1]{fontenc}
\usepackage{palatino}

\def\k0{\kappa_0}

\def\lgl{\langle}
\def\rgl{\rangle}
\def\bfe{{\mathsf{e}}}

\def\bfu{{\bf{u}}}
\def\bfx{{\bf{x}}}
\def\bfy{{\bf{y}}}
\def\bfo{{\bf{0}}}
\def\bfn{{\bf{n}}}
\def\bfc{{\bf{c}}}

\def\mR3{{\mathbb{R}^3}}

\begin{document}
\title[Anomalous Cascades]
{Anomalous dissipation and energy cascade in 3D inviscid flows}
\author{R. Dascaliuc}
\address{Department of Mathematics\\
University of Virginia\\ Charlottesville, VA 22904}
\author{Z. Gruji\'c}
\address{Department of Mathematics\\
University of Virginia\\ Charlottesville, VA 22904}
\date{\today}
\begin{abstract}
Adopting the setting for the study of existence and scale locality
of the energy cascade in 3D viscous flows in \emph{physical space}
recently introduced by the authors to 3D inviscid flows, it is shown
that the anomalous dissipation is -- in the case of decaying
turbulence -- indeed capable of triggering the cascade which then
continues \emph{ad infinitum}, confirming Onsager's predictions.

\end{abstract}
\maketitle



\section{Introduction}

The paper in hand concerns the phenomenon of `anomalous dissipation'
and existence of the energy cascade in 3D inviscid incompressible
flows described by the 3D Euler equations,
\[
 \bfu_t + (\bfu \cdot \nabla)\bfu = - \nabla p,
\]
supplemented with the incompressibility condition $\, \mbox{div} \,
\bfu = 0$. The vector field $\bfu$ represents the velocity of the fluid
and the scalar field $p$ the (internal) pressure; the density is set
to 1.

\medskip

It was conjectured by Onsager in 1949 \cite{On49} that ``...in three
dimensions a mechanism for complete dissipation of all kinetic
energy, even without aid of the viscosity, is available.'' More
precisely, Onsager conjectured that the minimal spatial regularity
of a (weak) solution to the 3D Euler equations needed to conserve
energy is $\bigl(\frac{1}{3}\bigr)^+$, and that in the case the
energy is not conserved, the energy dissipation due to the lack of
regularity -- \emph{the anomalous dissipation} -- triggers the
energy cascade that continues \emph{ad infinitum} (cf. a summary of
Onsager's published and unpublished contributions to turbulence by
Eyink and Sreenivasan \cite{ES06}). In fact, as noticed in
\cite{ES06}, the following quotation from Onsager's note to Lin
(1945) seems to contain the first use of the word \emph{cascade} in
the theory of turbulence, ``The selection rule for the `modulation'
factor in each term of (8) suggests a `cascade' mechanism for the
process of dissipation, and also furnishes a dynamical basis for an
assumption which is usually made on dimensional grounds only''.

\medskip

There has been a series of mathematical works pertaining
$\frac{1}{3}$ minimality (most notably, the papers by Eyink
\cite{E94}, Constantin, E. and Titi \cite{CET94} and Duchon and
Robert \cite{DR00}) culminating with the paper by Cheskidov,
Constantin, Friedlander and Shvydkoy \cite{CCFS08} giving a
solution to one direction in $\frac{1}{3}$-minimality conjecture,
namely, showing
that as long as a weak solution to the 3D Euler equations is in the
space $L^3(0,T; B^{\frac{1}{3}}_{3, c_0})$ -- $B^{\frac{1}{3}}_{3,
c_0}$ being a subspace of Besov space $B^{\frac{1}{3}}_{3, \infty}$
in which boundedness over the Littlewood-Paley parameter $q$ is
replaced with the zero limit -- the energy equality holds. Looking
more precisely into local spatiotemporal structure and assuming that
the singular set is a smooth manifold, Shvydkoy \cite{Shvy09}
presented various spatiotemporal regularity criteria for the energy
conservation dimensionally equivalent to the critical one. In
addition, a complete solution to Onsager's $\frac{1}{3}$ minimality
in this setting was given for dyadic models (the models in which the
original non-local nonlinearity is replaced by a nonlinearity that
is local by design) (cf. \cite{CF09}). It may be tempting to think
that Onsager critical spatial regularity is also necessary for a
weak solution to conserve the energy; however, a family of explicit
energy-conserving flows well below Onsager criticality was recently
given by Bardos and Titi \cite{BT10} .

\medskip

On the other hand, to the best of our knowledge, there has been no
rigorous mathematical work showing that the anomalous dissipation is
indeed capable of triggering the energy cascade.

\medskip

Various methods of obtaining weak solutions to the Euler equations
were introduced in \cite{Sc93, Shn97, Shn00, DLS09, DLS10}. In
particular, the construction in \cite{Shn00} and a very recent work
\cite{DLS10} yield \emph{energy-dissipating} weak solutions to the
3D Euler.

\medskip

Recall that an $L^3_{loc}$ (in the space-time) solution $\bfu=(u^1,u^2,u^3)$
satisfies the \emph{local energy inequality} if
\[
 \partial_t \frac{1}{2} |\bfu|^2 + \, \mbox{div} \, \biggl( \Bigl(\frac{1}{2}
 |\bfu|^2+p\Bigr) \bfu \biggr)
 \le 0
\]
in the sense of distributions, i.e., if
\begin{equation}\label{lee}
 \frac{1}{2}\iint|\bfu|^2 \partial_t\phi \,d\bfx\,dt\
+ \iint\Bigl(\frac{1}{2}|\bfu|^2+p\Bigr)\bfu\cdot\nabla\phi\,d\bfx\,dt \ge 0
\end{equation}
for all nonnegative test functions $\phi$. (Note that
\[
 - \Delta p = \partial_i \partial_j u^i u^j
 \]
and the local elliptic theory imply that -- provided $\bfu$ is in
$L^3_{loc}$ -- all the terms in (\ref{lee}) are well-defined.)

\medskip

Let $\bfu$ be a weak solution to the 3D Navier-Stokes equations (NSE)
or an $L^3$ (in the space-time) weak solution to the 3D Euler
equations. (One should note that, at present, no general
construction of $L^3$ weak solutions to the 3D Euler equations
exists.) Duchon and Robert \cite{DR00} (in the case of the torus)
gave an explicit limit formula for a distribution $D(\bfu)$ (in the
space-time) measuring anomalous dissipation in the flow; defining
$D(\bfu)$ by
\[
 D(\bfu) = \lim_{\epsilon \to 0} \frac{1}{4} \int \nabla
 \phi^\epsilon(\bfy)
 \cdot \delta \bfu |\delta \bfu|^2 \, d\bfy
\]
where $\delta \bfu = \bfu(\bfx+\bfy)-\bfu(\bfx)$ and $\{\phi^\epsilon\}$ is a family
of standard mollifiers, the following form of the \emph{local energy
equality} holds,
\begin{equation}\label{du}
 \partial_t\Bigl(\frac{1}{2}|\bfu|^2\Bigr) + \, \mbox{div} \,
 \biggl(\Bigl(\frac{1}{2}|\bfu|^2+p\Bigr) \bfu\biggr) - \nu \Delta
 \frac{1}{2} |\bfu|^2 + \nu
 |\nabla \bfu|^2 + D(\bfu) = 0
\end{equation}
($\nu = 0$ for the Euler). Notice that in the case of the 3D NSE,
$D(\bfu) \ge 0$ (in the sense of distributions) is equivalent to the
local energy inequality; this is satisfied by all `suitable weak
solutions' constructed in \cite{Sc77, CKN82}, and in fact by any
weak solution obtained as a limit of a subsequence of the Leray
regularizations. In the case of the 3D Euler, $D(\bfu) \ge 0$ -- or
equivalently, the local energy inequality (\ref{lee}) holds -- for
any $L^3$ weak solution obtained as a strong $L^3$-limit of weak
solutions to the 3D NSE (as the viscosity $\nu$ goes to 0)
satisfying the local energy inequality. This motivated Duchon and
Robert to call weak solutions to the 3D incompressible fluid
equations satisfying $D(\bfu) \ge 0$ `dissipative'. Any
`dissipative' solution to the 3D Euler is also dissipative in the
sense of Lions \cite{PLL96}; a detailed proof of this fact can be
found in \cite{DLS10}, Appendix B. Note that we do not know if
globally dissipative solutions constructed in \cite{Shn00, DLS10}
are locally dissipative; there may be regions exhibiting local
creation of energy. However, at any given spatial scale $R_0$, there
will be regions exhibiting energy dissipation.

\medskip

In this paper, we show -- via suitable \emph{ensemble averaging} of
the local energy inequality over a region containing (possible)
singularities of the 3D Euler equations -- that provided the
anomalous dissipation in the region is strong enough (with respect
to the energy), the energy cascade commences and continues \emph{ad
infinitum}, confirming Onsager's predictions. In fact, in the case
of a spatially isolated singularity -- provided the anomalous
dissipation is positive, i.e., the strict energy inequality holds on
\emph{some} neighborhood of the singular curve -- the cascade
condition will hold on any small enough (in the spatial coordinates)
neighborhood of the singular curve.

\medskip

The approach is based on a very recent work \cite{DG10} in which a
setting for a rigorous mathematical study of the energy cascade in
\emph{physical space} was introduced. More precisely, the 3D NSE
were utilized via ensemble averaging of the local energy inequality
over a region of interest with respect to `$(K_1,K_2)$-covers' (see
below) to establish both existence of the energy cascade and scale
locality in decaying turbulence -- zero driving force and
non-increasing global energy -- under a very simple condition
plausible in the regions of intense fluid activity (large
gradients); namely, that Taylor micro scale is dominated by the
integral scale. This furnished the first proof of existence of the
energy cascade in 3D viscous flows in \emph{physical scales}, as
well as the only mathematical setting in which both existence of the
cascade and locality were obtained directly from the 3D NSE.

\medskip

For simplicity, assume that the region of interest is ball
$B(\bfo,R_0)$  ($R_0$ being the integral scale), $B(\bfo,2R_0)$
contained in the global spatial domain, and $0 < R \le R_0$. Let
$K_1$ and $K_2$ be two positive integers. A cover
$\{B(\bfx_i,R)\}_{i=1}^n$ of $B(\bfo,R_0)$ is a \emph{$(K_1,K_2)$
cover at scale $R$} if
\[
 \biggl(\frac{R_0}{R}\biggr)^3 \le n \le K_1
 \biggr(\frac{R_0}{R}\biggr)^3,
\]
and any point $\bfx$ in $B(\bfo,R_0)$ is covered by at most $K_2$ balls
$B(\bfx_i,2R)$; the parameters $K_1$ and $K_2$ represent
\emph{global} and \emph{local maximal multiplicities}, respectively.

\begin{obs}
\emph{The $(K_1,K_2)$-covers were originally named 'optimal
coverings' \cite{DG10}; here -- and elsewhere -- we renamed them
`$(K_1,K_2)$-covers` to emphasize a key role played by the maximal
multiplicities $K_1$ and $K_2$.}
\end{obs}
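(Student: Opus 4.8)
This statement is a terminological \emph{Remark} rather than a mathematical proposition, so strictly speaking there is no proof obligation: it asserts a fact about nomenclature — namely that the objects called \emph{$(K_1,K_2)$-covers} in the present paper are identical to those introduced under the name \emph{optimal coverings} in \cite{DG10} — together with the authors' stated rationale for the change of name. Accordingly, the plan is not to \emph{derive} anything, but only to \emph{verify} a definitional identification and to record a motivation.

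The verification step is simply to compare the definition of a $(K_1,K_2)$-cover given immediately above — a cover $\{B(\bfx_i,R)\}_{i=1}^n$ of $B(\bfo,R_0)$ at scale $R$ subject to the two-sided count bound $(R_0/R)^3 \le n \le K_1 (R_0/R)^3$ together with the local finite-overlap bound that each point of $B(\bfo,R_0)$ lies in at most $K_2$ of the doubled balls $B(\bfx_i,2R)$ — with the corresponding definition in \cite{DG10}. Since the two families of inequalities coincide verbatim, the two names denote one and the same class of covers; nothing in the underlying geometry, nor in any subsequent estimate, depends on which label is attached to it.

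The only substantive content is the \emph{motivation} for preferring the new name, which I would phrase as follows: in every later application the constants $K_1$ and $K_2$ enter quantitatively, with $K_1$ controlling the global multiplicity (hence the total number of balls entering the ensemble average) and $K_2$ controlling the local multiplicity (hence the loss incurred when the summed local inequalities are reassembled over $B(\bfo,R_0)$). Since it is precisely these two parameters — and not any notion of optimality — that propagate into the cascade estimates, the name \emph{$(K_1,K_2)$-cover} is the more informative one. There is accordingly no obstacle to address: the remark is self-justifying once the two definitions are matched, and no genuine mathematical difficulty arises.
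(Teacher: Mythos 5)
You are correct: this is a terminological remark with no proof obligation, and indeed the paper supplies no proof for it — the remark stands on its own, just as you say. Your identification of the two definitions and your account of why the multiplicities $K_1$ and $K_2$ (rather than any ``optimality'') drive the later estimates matches the paper's own usage, so there is nothing further to check.
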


Let $f$ be an \emph{a priori} sign-varying density corresponding to
some physical quantity of interest (e.g., the flux density $ -
[(\bfu \cdot \nabla)\bfu+\nabla p] \cdot \bfu$), and consider the
arithmetic mean of the quantity per unit mass averaged over the
cover elements $B(\bfx_i, R)$,
\[
 F_R = \frac{1}{n} \sum_{i=1}^n \frac{1}{R^3} \int_{B(\bfx_i, 2R)} f \,
 \psi_i^\delta \, d\bfx,
\]
for some $0 < \delta \le 1$ where $\psi_i$ are smooth spatial
cut-offs associated with the balls $B(\bfx_i,R)$. The key property
of the ensemble averages $F_R$ is that $F_{R} \approx \,
\mbox{const} \, (R) \ $ for \emph{all} $(K_1,K_2)$-covers at scale
$R$ indicates there are no significant sign-fluctuations of the
density $f$ at scales comparable or greater than $R$. In other
words, if there are significant sign-fluctuations at scale $R^*$,
the averages $F_R$ will run over a wide range of values (across,
say, an interval $(-M,M)$ for some large $M$) -- simply by
rearranging and stacking up the cover elements up to the maximal
multiplicities -- for any $R$ comparable or less than $R^*$. Hence,
the averages act as a \emph{coarse detector} of the
sign-fluctuations at scale $R$ (of course, the large the
multiplicities, the finer detection).

In the case of a signed quantity (e.g., the energy density or the
enstrophy density) -- for any scale $R$, $0 < R \le R_0$ -- the
averages $F_R$ are all comparable to each other and in particular,
to the simple average over the spatial integral domain.

\medskip

Let $\bfu$ be a weak solution to the 3D Euler equations satisfying the
local energy inequality (\ref{lee}) (dissipative in the sense of
\cite{DR00}). Given $\phi$ -- a smooth spatiotemporal cut-off over
$(0,2T) \times B(\bfx,2R)$ -- denote by
$\varepsilon_{\bfx,R}$ the \emph{anomalous
dissipation} due to (possible) singularities located in the support
of $\phi$

\begin{equation}\label{anomall}
\varepsilon_{\bfx_,R} =
\iint\Bigl(\frac{1}{2}|\bfu|^2+p\Bigr)\bfu\cdot\nabla\phi\,d\bfy\,dt +
\frac{1}{2}\iint|\bfu|^2 \partial_t\phi \,d\bfy\,dt\ \ge 0,
\end{equation}
and let $\varepsilon_0= \frac{1}{T}\frac{1}{R_0^3}
\varepsilon_{\bfo,R_0}$ indicate the spatiotemporal average of the
anomalous dissipation due to singularities in $(0,T)\times
B(\bfo,2R_0)$. Following the general idea of ensemble averaging with
respect to $(K_1,K_2)$-covers, consider the spatiotemporal ensemble
averages of the local \emph{anomalous} dissipation quantities
$\{\varepsilon_{\bfx_i, R}\}$,

\begin{equation}
\varepsilon_R=\frac{1}{n}\sum\limits_{i=1}^{n}
\frac{1}{T}\frac{1}{R^3}\varepsilon_{\bfx_i,R}.
\end{equation}
Note that $\varepsilon_{x_i,R} = \bigl(D_i(\bfu), \phi_i\bigr)$
where $D_i(\bfu)$ is Duchon-Robert distribution $D(\bfu)$ measuring
anomalous dissipation associated with the ball $B(\bfx_i,R)$.

\medskip

A key property exploited in the proof of existence of the energy
cascade in the viscous case (cf. \cite{DG10}) was that the ensemble
averages of the time-averaged local viscous dissipation quantities
per unit mass at any scale $R$, $0 < R \le R_0$, were comparable
with the spatiotemporal average of the viscous dissipation term
associated to the integral domain $B(\bfo, R_0)$; this was simply a
consequence of the enstrophy density being non-negative. The
following lemma -- to be proved in the subsequent section --
provides an analogous statement in the realm of anomalous
dissipation, and is a key technical ingredient in establishing the
cascade in the inviscid case.

\begin{lem}
Let $\{B(\bfx_i, R)\}_{i=1}^n$ be a $(K_1,K_2)$-cover of $B(\bfo,
R_0)$ at scale $R$. Then, there exists a constant $K = K(K_1, K_2)
> 1$ such that
\[
 \frac{1}{K} \varepsilon_0 \le \varepsilon_R \le K \varepsilon_0,
\]
for any $R$, $0 < R \le R_0$.
\end{lem}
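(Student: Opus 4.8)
The plan is to reduce everything to the single structural fact that makes the inviscid case run parallel to the viscous one in \cite{DG10}: the local energy inequality (\ref{lee}) is exactly the statement that the Duchon--Robert distribution $D(\bfu)$ of (\ref{du}) is non-negative, and a non-negative distribution is a non-negative Radon measure. Writing $\mu = D(\bfu)$, the identity recorded after (\ref{anomall}) reads $\varepsilon_{\bfx_i,R} = (\mu,\phi_i)$ and $\varepsilon_{\bfo,R_0} = (\mu,\phi_0)$, where $\phi_i,\phi_0$ are the spatiotemporal cut-offs attached to $B(\bfx_i,R)$ and $B(\bfo,R_0)$. The whole point of positivity is monotonicity: if $0\le\psi\le\chi$ pointwise, then $(\mu,\psi)\le(\mu,\chi)$. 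Thus every comparison between the $\varepsilon$'s collapses to a pointwise comparison of the associated test functions --- the exact analogue of the non-negativity of the enstrophy density exploited in the viscous setting.

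Next I would reduce the claim to two pointwise cut-off estimates. Normalize so that $0\le\phi_i\le1$ with $\phi_i\equiv1$ on $(0,T)\times B(\bfx_i,R)$ and $\mathrm{supp}\,\phi_i\subset(0,2T)\times B(\bfx_i,2R)$, and likewise $\phi_0$ on $B(\bfo,R_0)$, $B(\bfo,2R_0)$. The three defining properties of a $(K_1,K_2)$-cover then give exactly what is needed: the count $R_0^3\le nR^3\le K_1 R_0^3$; the bounded local overlap $\sum_i\mathbf{1}_{B(\bfx_i,2R)}\le K_2$, hence $\sum_i\phi_i\le K_2$; and the covering $\sum_i\mathbf{1}_{B(\bfx_i,R)}\ge\mathbf{1}_{B(\bfo,R_0)}$, hence $\sum_i\phi_i\ge1$ on $B(\bfo,R_0)$. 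Arranging the supports to sit inside the region where $\phi_0\equiv1$ upgrades these to the pointwise sandwich $\phi_0\le\sum_i\phi_i\le K_2\,\phi_0$.

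Feeding this into $\mu$ finishes the two bounds. For the upper bound, linearity and monotonicity give $\sum_i\varepsilon_{\bfx_i,R}=\bigl(\mu,\sum_i\phi_i\bigr)\le K_2(\mu,\phi_0)=K_2\,\varepsilon_{\bfo,R_0}$; dividing by $nTR^3$ and using $nR^3\ge R_0^3$ yields $\varepsilon_R\le K_2\,\varepsilon_0$. For the lower bound, $\sum_i\varepsilon_{\bfx_i,R}\ge(\mu,\phi_0)=\varepsilon_{\bfo,R_0}$; dividing and using $nR^3\le K_1 R_0^3$ yields $\varepsilon_R\ge\frac{1}{K_1}\varepsilon_0$. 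Taking $K=\max\{K_1,K_2\}$, enlarged if necessary so that $K>1$, gives the asserted two-sided bound for every $0<R\le R_0$.

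The hard part is not the measure-theoretic bookkeeping but the geometric cut-off construction that makes $\phi_0\le\sum_i\phi_i\le K_2\,\phi_0$ hold pointwise. Two spots demand care. For the upper estimate one must keep each $\mathrm{supp}\,\phi_i$ inside the set where $\phi_0\equiv1$, so that the overlap bound can be compared against $\phi_0$ rather than merely against $1$; this constrains the admissible covers to have centers $\bfx_i$ staying well within $B(\bfo,2R_0)$. For the lower estimate the delicate region is the transition annulus where $\phi_0$ tapers from $1$ to $0$, since the covering inequality only guarantees $\sum_i\phi_i\ge1$ on $B(\bfo,R_0)$ itself. This is precisely the step carried out by the refined cover/cut-off machinery of \cite{DG10}, and I would either quote that construction or reproduce its cut-off estimate to close the gap; everything else follows from the positivity of $D(\bfu)$ together with the counting and overlap inequalities of the $(K_1,K_2)$-cover.
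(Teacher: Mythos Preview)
Your conceptual framing---that the local energy inequality is precisely non-negativity of the Duchon--Robert distribution $D(\bfu)$, so that every comparison of the $\varepsilon$'s reduces to a pointwise comparison of test functions---is exactly the mechanism the paper uses (and is made explicit in the remark immediately following the lemma). Your lower bound is the paper's argument verbatim: $\sum_i\phi_i-\phi_0\ge 0$, apply (\ref{lee}) to this test function, and use $nR^3\le K_1R_0^3$ to get $\varepsilon_R\ge K_1\varepsilon_0$.

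The upper bound is where you and the paper diverge. You aim for the pointwise sandwich $\sum_i\phi_i\le K_2\phi_0$, which would yield the clean constant $K_2$. But the $(K_1,K_2)$-cover only bounds the multiplicity of the doubled balls $B(\bfx_i,2R)$ at points of $B(\bfo,R_0)$, while the paper's cut-offs for boundary balls are deliberately extended into the annulus $B(\bfo,2R_0)\setminus B(\bfo,R_0)$ via the cone construction (precisely so that $\sum_i\phi_i\ge\phi_0$ holds there for the lower bound). You correctly flag this tension in your last paragraph, but you do not resolve it; you defer to \cite{DG10}. The paper does \emph{not} resolve it either---it bypasses it. Instead of a global overlap bound, the paper uses a lattice decomposition: it partitions a cubic lattice of mesh $R/2$ into $8^3$ sub-lattices of mesh $4R$, so that within each sub-lattice the associated doubled balls are pairwise disjoint and hence $\phi_0-\sum_j\phi_{i_j}\ge 0$ trivially (only $\phi_{i_j}\le\phi_0$ is needed, and that is built into the construction). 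Summing over the $8^3$ sub-lattices and using that each lattice point lies in at most $K_2$ cover balls gives $\sum_i\varepsilon_{\bfx_i,R}\le 8^3K_2\,TR_0^3\varepsilon_0$, whence $\varepsilon_R\le 8^3K_2\,\varepsilon_0$. Your route buys a sharper constant if the pointwise overlap can be pushed through the annulus; the paper's route buys a proof that needs nothing beyond $\phi_i\le\phi_0$, at the price of the factor $8^3$.
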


\begin{obs}
\emph{As a matter of fact, the proof of the lemma can be easily
modified to show that the analogous property holds for \emph{any
non-negative distribution}; in light of this observation, the
statement of the lemma is simply a consequence of the non-negativity
of Duchon-Robert distribution $D(\bfu)$.}
\end{obs}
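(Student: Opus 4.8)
The plan is to exploit the single structural fact isolated in the remark: since $\bfu$ is dissipative in the sense of Duchon--Robert, the distribution $D(\bfu)$ is non-negative, and a non-negative distribution on an open set is represented by a non-negative Radon measure. Accordingly, I would first fix such a measure $\mu$ with $\langle D(\bfu),\psi\rangle=\int\psi\,d\mu$ for all test functions $\psi$, so that the local anomalous dissipations become genuine integrals, $\varepsilon_{\bfx_i,R}=\langle D(\bfu),\phi_i\rangle=\int\phi_i\,d\mu$ and $\varepsilon_{\bfo,R_0}=\int\phi_0\,d\mu$, where $\phi_i$ is the spatiotemporal cut-off attached to $B(\bfx_i,R)$ and $\phi_0$ the one attached to $B(\bfo,R_0)$. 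Each cut-off takes values in $[0,1]$, equals $1$ on its plateau $(0,T)\times B(\bfx_i,R)$ (resp. $(0,T)\times B(\bfo,R_0)$) and is supported in $(0,2T)\times B(\bfx_i,2R)$ (resp. $(0,2T)\times B(\bfo,2R_0)$). Writing the ensemble average as $\varepsilon_R=\frac{1}{T}\,\frac{1}{nR^3}\int\bigl(\sum_{i=1}^n\phi_i\bigr)\,d\mu$ reduces the whole statement to comparing the single density $\sum_i\phi_i$ against $\phi_0$ on the support of $\mu$, together with bookkeeping of the prefactor $\frac{1}{nR^3}$.

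The geometric heart of the argument is a pointwise two-sided bound on $\sum_i\phi_i$ restricted to the region charged by $\mu$. On the common time plateau $(0,T)$ every cut-off equals $1$ in the time variable, so it suffices to work in space on $B(\bfo,R_0)$. There $\phi_0\equiv 1$; moreover, since the balls $\{B(\bfx_i,R)\}$ cover $B(\bfo,R_0)$ and each $\phi_i$ equals $1$ on $B(\bfx_i,R)$, every point lies in at least one plateau, whence $\sum_i\phi_i\ge 1$. The defining local-multiplicity property of a $(K_1,K_2)$-cover says that every point of $B(\bfo,R_0)$ meets at most $K_2$ of the enlarged balls $B(\bfx_i,2R)\supseteq\mathrm{supp}\,\phi_i$, whence $\sum_i\phi_i\le K_2$. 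Hence $\phi_0\le\sum_i\phi_i\le K_2\,\phi_0$ on $\mathrm{supp}\,\mu$, and integrating this against the non-negative measure $\mu$ gives $\int\phi_0\,d\mu\le\int\bigl(\sum_i\phi_i\bigr)\,d\mu\le K_2\int\phi_0\,d\mu$, that is, $T R_0^3\,\varepsilon_0\le\int\bigl(\sum_i\phi_i\bigr)\,d\mu\le K_2\,T R_0^3\,\varepsilon_0$.

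It then remains to insert the prefactor. The global-multiplicity condition $\bigl(\tfrac{R_0}{R}\bigr)^3\le n\le K_1\bigl(\tfrac{R_0}{R}\bigr)^3$ is precisely $R_0^3\le nR^3\le K_1 R_0^3$, so $\frac{1}{K_1 R_0^3}\le\frac{1}{nR^3}\le\frac{1}{R_0^3}$. Combining with the previous estimate yields $\varepsilon_R\ge\frac{1}{T K_1 R_0^3}\cdot T R_0^3\,\varepsilon_0=\frac{1}{K_1}\varepsilon_0$ and $\varepsilon_R\le\frac{1}{T R_0^3}\cdot K_2\,T R_0^3\,\varepsilon_0=K_2\,\varepsilon_0$, so the asserted inequality holds with $K=\max\{K_1,K_2\}+1>1$.

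The main obstacle I anticipate is not the algebra but justifying the comparison cleanly near the boundary of the region. The enlarged balls $B(\bfx_i,2R)$ can protrude outside $B(\bfo,R_0)$ into the annulus where $\phi_0<1$, so the domination $\sum_i\phi_i\le K_2\,\phi_0$ is only immediate where $\mu$ is actually supported; one therefore needs either that the anomalous dissipation is concentrated on the singular set inside $B(\bfo,R_0)$ --- the relevant case here --- or a slightly enlarged plateau for $\phi_0$, with any boundary contributions absorbed into $K$. The other point requiring care is the appeal to the representation of a non-negative distribution by a measure, which is exactly what makes ``integrate the pointwise inequality'' legitimate; once that is in place the estimate is, as the remark indicates, purely a statement about non-negative measures and the combinatorics of the $(K_1,K_2)$-cover.
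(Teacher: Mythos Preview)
Your reduction via the Riesz representation---writing $\varepsilon_{\bfx_i,R}=\int\phi_i\,d\mu$ for a non-negative Radon measure $\mu$---is correct and is exactly the mechanism behind the remark: the paper's proof of the lemma only ever tests the local energy inequality on non-negative $\phi$, i.e., only uses $\langle D(\bfu),\phi\rangle\ge 0$, so it transfers verbatim to any non-negative distribution. Your lower bound is the same as the paper's: both amount to $\sum_i\phi_i-\phi_0\ge 0$ pointwise (the paper tests $D(\bfu)$ directly on this difference, you integrate it against $\mu$; same thing), followed by $\frac{1}{nR^3}\le\frac{1}{R_0^3}$ from the $(K_1,K_2)$-cover definition.

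The upper bound, however, is handled differently. You use the local-multiplicity bound $\sum_i\phi_i\le K_2$ directly, which---as you correctly flag---is only guaranteed by the definition at points of $B(\bfo,R_0)$ and is \emph{not} automatic in the annulus $B(\bfo,2R_0)\setminus B(\bfo,R_0)$; for a general non-negative distribution (as in the remark) one cannot assume $\mu$ charges only $B(\bfo,R_0)$. The paper avoids this boundary issue altogether by a different route: it extracts subfamilies $\{B(\bfx_{i_j},2R)\}$ with pairwise disjoint interiors, so that $\phi_0-\sum_j\phi_{i_j}\ge 0$ holds everywhere simply because each $\phi_{i_j}\le\phi_0$ by construction and the supports do not overlap. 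Testing $D(\bfu)$ (equivalently, integrating $\mu$) on this non-negative function gives $\sum_j\varepsilon_{\bfx_{i_j},R}\le TR_0^3\,\varepsilon_0$; a lattice decomposition then collects all balls into $8^3$ such disjoint subfamilies (each counted up to $K_2$ times), yielding the constant $K=8^3K_2$. Your approach is more elementary and gives the sharper constant $K_2$ when it applies, but to make it rigorous for arbitrary $\mu$ you must actually verify $\sum_i\phi_i\le K_2\phi_0$ in the annulus---this can be done from the paper's boundary cut-off construction (the conical extensions and the condition $\psi_i\le\psi_0$), but it is not free and your proposal leaves it as an acknowledged loose end. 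The paper's disjoint-subcollection argument is what makes the upper bound cleanly independent of where $\mu$ sits.
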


\medskip

In the viscous case (3D NSE), the term $\iint |\bfu|^2 \nu \Delta
\phi \, d\bfx \, dt \,$ in the local energy inequality furnished the
breaking mechanism restricting the inertial range (via the estimate
on $\Delta \phi$ with respect to the spatial scale -- for details
see \cite{DG10}); in the inviscid case, once it starts -- provided
that $\varepsilon_0$ is strong enough compared to the spatiotemporal
average of the energy associated with $B(\bfo, R_0)$ -- the cascade
will continue indefinitely (as expected).

\medskip

Assuming certain geometric properties of the singular set leads to
improved results. As an illustration, we show that in the case of a
spatially isolated singularity (the singular set being a curve
$\bfc=\bfc(t)$) -- provided the strict energy inequality holds on
\emph{some} neighborhood of the singular curve -- the anomalous
dissipation will in fact dominate the corresponding energy on any
small enough (in spatial directions) neighborhood of the singular
curve, triggering the cascade. The reason behind this phenomenon is
that -- in the case of a spatially isolated singularity -- the
anomalous dissipation over any family of nested tubular
neighborhoods containing the singular curve is constant. Let us note
that a natural Onsager critical space here is $(L^3_t
L^{4.5}_{\bfx})_{loc}$ (cf. \cite{Shvy09}); hence, a class of weak
solutions compatible with existence of the energy cascade in this
setting is $(L^3_t L^\alpha_{\bfx})_{loc}, \, 3 \le \alpha < 4.5$.

\medskip

Scale locality of the cascade manifests in several ways analogous to
the viscous case (cf. \cite{DG10}). We present locality of the
ensemble averages of the time-averaged fluxes at spatial scale $R$.
In particular, considering the dyadic case --  $r = 2^k R$ ($k$ an
integer) -- both ultraviolet and infrared locality propagate
\emph{exponentially} in  $k$ as predicted by turbulence
phenomenology.


\section{Localized energy and flux; anomalous dissipation and ensemble averages}

Let $\bfu$ be a weak solution to the 3D Euler equations satisfying
the local energy inequality on a spatiotemporal domain $\Omega
\times (0,2T)$ (for simplicity, assume that $\Omega$ contains the
origin), and let $R_0>0$ be such that $B(\bfo,2R_0)$ is contained in
$\Omega$; $B(\bfo, R_0)$ is our region of interest and $R_0$ the
integral scale in the problem. Choose
$\psi_0\in\mathcal{D}(B(\bfo,2R_0))$ satisfying
\begin{equation}\label{psi0}
0\le\psi_0\le 1,\quad\psi_0=1\ \mbox{on}\ B(\bfo,R_0)\;.
\end{equation}

\medskip

For  $T>0$, $\bfx_0\in B(\bfo,R_0)$ and $0<R\le R_0$ define
$\phi=\phi_{\bfx_0,T,R}(t,\bfx)=\eta(t)\psi(\bfx)$ to be used in the
local energy inequality (\ref{lee}) where $\eta=\eta_T(t)$
and $\psi=\psi_{\bfx_0,R}(\bfx)$
satisfy the following conditions,

\begin{equation}\label{eta_def}
\eta\in\mathcal{D}(0,2T),\quad 0\le\eta\le1,\quad\eta=1\ \mbox{on}\
(T/4,5T/4),\quad\frac{|\eta'|}{\eta^{\delta}}\le\frac{C_0}{T}\;
\end{equation}
for some $0 < \delta \le 1$;

\medskip

\noindent if $B(\bfx_0,R)\subset B(\bfo,R_0)$, then $\psi\in\mathcal{D}(B(\bfx_0,2R))$
with $0\le\psi\le\psi_0, \ \psi=1$ on $B(\bfx_0,R)$,

\medskip

\noindent and if $B(\bfx_0,R)\not\subset B(\bfo,R_0)$, then $\psi\in\mathcal{D}(B(\bfo,2R_0))$
with  $0\le\psi\le\psi_0, \ \psi=1$ on $B(\bfx_0,R)\cap B(\bfo,R_0)$ satisfying the following:

\medskip

\noindent $\psi=\psi_0$ on the part of the cone in $\mR3$ centered at
zero and passing through
$S(\bfo,R_0)\cap B(\bfx_0,R)$ between $S(\bfo,R_0)$ and $S(\bfo,2R_0)$,
and $\psi=0$ on $B(\bfo,R_0)\setminus B(\bfx_0,2R)$ and outside
the part of the cone in $\mR3$ centered at zero and passing through
$S(\bfo,R_0)\cap B(\bfx_0,2R)$ between $S(\bfo,R_0)$ and $S(\bfo,2R_0)$.

\medskip

Figure \ref{ball_fig} illustrates the definition of $\psi$ in the case $B(\bfx_0,R)$ is not
 entirely contained in
$B(\bfo,R_0)$.

\begin{figure}
  \centerline{\includegraphics[scale=1, viewport=188 449 493 669, clip] {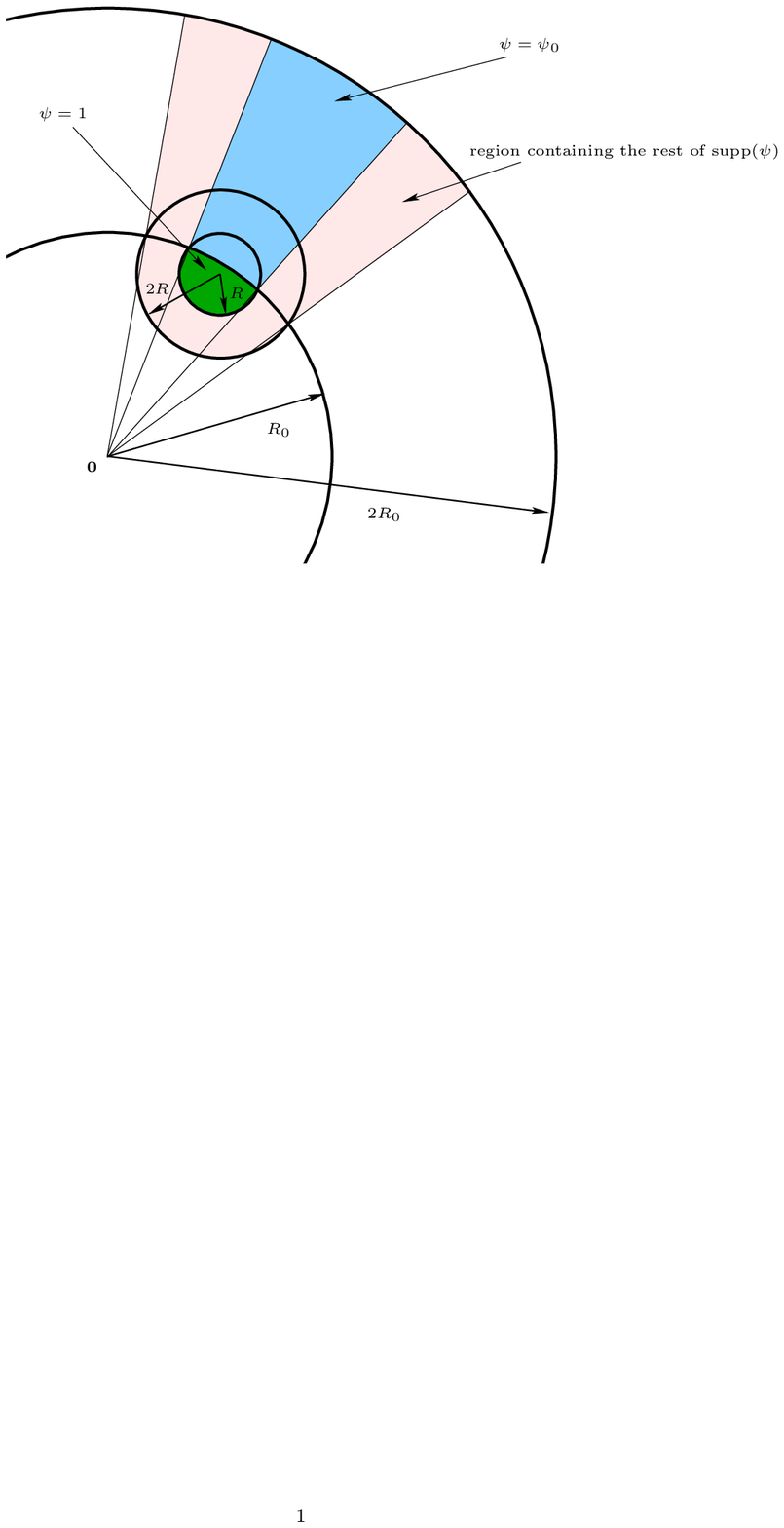}}
  \caption{Regions of supp$(\psi)$ in the case $B(\bfx_0,R)\not\subset B(\bfo,R_0)$,
  cross-section.}
  \label{ball_fig}
\end{figure}

\medskip

Note that in contrast to the Navier-Stokes case (cf. \cite{DG10}),
we do not make growth assumptions on derivatives of $\psi$. As we
shall see later, only the time derivatives of the test functions
will matter in the cascade formation.

\medskip

In the case $\bfx_{0}=\bfo$ and $R=R_{0}$ set
\begin{equation}\label{phi0}
\phi_0(t,\bfx)=\eta(t)\psi_0(\bfx).
\end{equation}

\medskip

Let $\bfx_0\in B(\bfo,R_0)$ and $0<R\le R_0$. Define localized
energy $\bfe$ at time $t$ associated with
$B(\bfx_0,R)$ by
\begin{equation}\label{enerdef}
\bfe_{\bfx_0,R}(t)=\int \frac{1}{2}|\bfu|^2\phi^{\delta}\,d\bfx\;.
\end{equation}

\medskip

A total inward flux -- (kinetic) energy plus pressure -- through the
boundary of a region $D$ is given by
\[
- \int\limits_{\partial D}\biggl(\frac{1}{2}|\bfu|^2+ p\biggr)\bfu
\cdot \bfn \,ds  \, = \, - \int\limits_{D}\Bigl((\bfu \cdot
\nabla)\bfu + \nabla p\Bigr) \cdot \bfu \, d\bfx
\]
where $\bfn$ is an outward normal. Considering the Euler equations
localized to
$B(\bfx_0,2R)$ -- and utilizing $\, \mbox{div} \, \bfu = 0$ -- leads to a
localized flux,
\begin{equation}\label{fluxdef}
\Phi_{\bfx_0,R}(t)=\int \biggl(\frac{1}{2}|\bfu|^2+p\biggr) \bfu \cdot \nabla
\phi \, d\bfx = - \int \Bigl((\bfu \cdot \nabla)\bfu + \nabla p\Bigr) \cdot \bfu
\, \phi \, d\bfx.
\end{equation}
Since $\psi$ can be constructed such that $\nabla \phi=\eta \,
\nabla \psi$ is oriented along the radial directions of $B(\bfx_0,
2R)$ \emph{toward the center of the ball}, $\Phi_{\bfx_0,R}$
represents the total flux {\em into} $B(\bfx_0,R)$ through the layer
between the spheres $S(\bfx_0,2R)$ and $S(\bfx_0,R)$ ($\nabla \phi
\equiv 0$ on $B(\bfx_0, R)$). (In the case of the boundary elements,
$\psi$ is almost radial and $\nabla\psi$ still points inward.)

\medskip

A more dynamic physical significance of the sign of $\Phi_{\bfx_0,
R}$ can be seen from the equations; for the sake of a more concise
interpretation of the local flux term $\Phi_{\bfx_0, R}$, let us for
a moment assume smoothness. Then, multiplying the Euler equations by
$\psi \bfu$ and integrating over $B(\bfx_0, 2R)$ leads to
\begin{equation}\label{loc_tr}
\frac{d}{dt} \int \frac{1}{2} |\bfu|^2 \psi \, d\bfx = \Phi_{\bfx_0, R}.
\end{equation}
Plainly, the positivity of $\Phi_{\bfx_0, R}$ implies the
increase of the kinetic energy around the point $\bfx_0$ at scale $R$.

\medskip

The key question is what can we say about the \emph{transfer of the
kinetic energy around the point $\bfx_0$ at scale $R$}, i.e., the
total exchange between the kinetic energy associated with the ball
$B(\bfx_0,R)$ and the kinetic energy in the complement of the ball
$B(\bfx_0,2R)$. In general, not much -- one can envision a variety
of scenarios. However, in physical situations where the kinetic
energy on the global spatial domain $\Omega$ is non-increasing
(here, we are concerned with the case of \emph{decaying turbulence},
setting the driving force to zero), the positivity of
$\Phi_{\bfx_0,R}$ implies the transfer of the kinetic energy around
the point $\bfx_0$ -- \emph{from larger scales} -- simply because
the local kinetic energy is increasing while the global kinetic
energy is non-increasing resulting in decrease of the kinetic energy
in the complement.

\medskip

Henceforth, following the discussion in the preceding paragraphs --
in the setting of decaying turbulence (zero driving force,
non-increasing global energy) -- the positivity and the negativity
of $\Phi_{\bfx_0, R}$ will be interpreted as transfer of kinetic
energy around the point $\bfx_0$ at scale $R$ toward smaller scales
and transfer of kinetic energy around the point $\bfx_0$ at scale
$R$ toward larger scales, respectively.

\medskip

As in Introduction, denote by $\varepsilon_{\bfx_0,R}$ the anomalous
dissipation of energy due to (possible) singularities inside $\,
\mbox{supp} \,  (\phi)$,

\begin{equation}\label{anomal-diss}
\varepsilon_{\bfx_{0},R}=\iint\frac{1}{2}|\bfu|^2\partial_t\phi\,d\bfx\,dt
+\iint\Bigl(\frac{1}{2}|\bfu|^2+p\Bigr)\bfu\cdot\nabla\phi\,d\bfx\,dt \ge 0\;.
\end{equation}

\medskip

For a physical quantity $\Theta_{\bfx,R}(t)$, $t\in(0,2T)$ and a
cover $\{B(\bfx_i,R)\}_{i=1,n}$ of $B(\bfo,R_0)$, consider the time
average of the ensemble-averaged local quantities
$\Theta_{\bfx_i,R}$ per unit mass,
\begin{equation}
\lgl\Theta\rgl_R=\frac{1}{T}\int
\frac{1}{n}\sum\limits_{i=1}^{n}
\frac{1}{R^3}\Theta_{\bfx_i,R}(t)\,dt\;.
\end{equation}

Set
\begin{equation}\label{e_R_def}
\bfe_R=\lgl \bfe_{\bfx,R}(t)\rgl_R\;
\end{equation}
and
\begin{equation}\label{Phi_R_def}
\Phi_R=\lgl \Phi_{\bfx,R}(t)\rgl_R\;;
\end{equation}
the averaged energy and inward-directed flux, respectively.

\medskip

In addition, consider the ensemble-averaged local anomalous
dissipation quantities $\varepsilon_{\bfx_i,R}$ -- per unit time and
per unit mass --

\begin{equation}\label{E_R_def}
\varepsilon_R=\frac{1}{n}\sum\limits_{i=1}^{n}
\frac{1}{T}\frac{1}{R^3}\varepsilon_{\bfx_i,R}\;.
\end{equation}

Finally, introduce the spatiotemporal average of the energy
associated to $B(\bfo,2R_0) \times (0,2T)$,
\begin{equation}\label{e_def}
\bfe_0=\frac{1}{T}\int
\frac{1}{R_0^3}\bfe_{\bfo,R_0}(t)\,dt=\frac{1}{T}\frac{1}{R_0^3}\iint \frac{1}{2}|\bfu|^2
\phi_0^\delta\,d\bfx\,dt\;
\end{equation}
and the spatiotemporal average of the anomalous dissipation on
$(0,2T)\times B(\bfx_0,2R)$,
\begin{equation}\label{E_def}
{\varepsilon_0}=\frac{1}{T}
\frac{1}{R_0^3}\varepsilon_{\bfo,R_0}=\frac{1}{T}\frac{1}{R_0^3}
\iint\frac{1}{2}\Bigl(|\bfu|^2\partial_t\phi_{0}
+(|\bfu|^2+2p)\bfu\cdot\nabla\phi_{0}\Bigr) \,d\bfx\,dt\;
\end{equation}
with $\phi_0$ defined in (\ref{phi0}), and define (anomalous)
Taylor length scale associated with $B(\bfo,R_0)$ by
\begin{equation}\label{tau_def}
\tau_0=\left(\frac{R_{0}^{2}}{T}\frac{\bfe_0}{\varepsilon_0}\right)^{1/2}\;.
\end{equation}

\medskip

Henceforth, all the averages $\lgl\cdot\rgl_R$ are taken with
respect to $(K_1,K_2)$-covers at scale $R$.

\medskip

The following lemma will be a key technical ingredient in
establishing the energy cascade in the next section.

\begin{lem}\label{anom_flux_lem}
Let $\{B(\bfx_i,R)\}_{i=1,n}$ be a $(K_1,K_2)$-cover of
$B(\bfo,R_0)$ at scale $R$. Then
\begin{equation}\label{Phi_infty_est}
{K_{1}}\varepsilon_0\le\varepsilon_R\le K\varepsilon_0\;
\end{equation}
where $K>0$ is a constant depending only on $K_2$ and dimension of
the space (in $\mR3$ , one can choose $K=8^3K_2$).
\end{lem}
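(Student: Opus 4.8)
The plan is to reduce the entire statement to the single structural fact recorded in the Remark: that $\varepsilon_{\bfx_i,R}=(D(\bfu),\phi_i)$ for the Duchon--Robert distribution $D(\bfu)$, which is a \emph{non-negative} distribution. Non-negativity means the pairing $\phi\mapsto(D(\bfu),\phi)$ is monotone on non-negative test functions, so both inequalities follow once I sandwich the superposition $\sum_{i=1}^n\phi_i$ between two multiples of the single test function $\phi_0=\eta\,\psi_0$ and then convert the resulting sum bounds into average bounds via the cardinality constraints $(R_0/R)^3\le n\le K_1(R_0/R)^3$ built into the definition of a $(K_1,K_2)$-cover. Since every $\phi_i=\eta\,\psi_i$ shares the common temporal factor $\eta$ and satisfies $0\le\psi_i\le\psi_0$ by construction, all the work is with the spatial cut-offs.

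For the upper bound I will use the local multiplicity. Because each point of $B(\bfo,R_0)$ lies in at most $K_2$ of the doubled balls $B(\bfx_i,2R)\supset\mathrm{supp}\,\psi_i$, at most $K_2$ of the $\psi_i$ are nonzero at any given point, and each is $\le\psi_0$; hence $\sum_i\psi_i\le K_2\,\psi_0$, i.e. $\sum_i\phi_i\le K_2\,\phi_0$ (up to the boundary correction discussed below). Monotonicity then gives $\sum_i\varepsilon_{\bfx_i,R}=(D(\bfu),\sum_i\phi_i)\le K_2\,(D(\bfu),\phi_0)=K_2\,\varepsilon_{\bfo,R_0}$. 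Dividing by $n\,T\,R^3$ and using $\varepsilon_{\bfo,R_0}=T R_0^3\varepsilon_0$ together with $1/n\le (R/R_0)^3$ yields $\varepsilon_R\le K\,\varepsilon_0$.

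For the lower bound I will use that $\{B(\bfx_i,R)\}$ is a genuine \emph{cover} of $B(\bfo,R_0)$: any $\bfx\in B(\bfo,R_0)$ lies in some $B(\bfx_i,R)$, where $\psi_i=1\ge\psi_0$, so $\sum_i\psi_i\ge\psi_0$ there; on the shell $R_0<|\bfx|<2R_0$ the conical extensions are designed precisely so that $\psi_i=\psi_0$ on the cone through a boundary ball, forcing $\sum_i\psi_i\ge\psi_0$ on all of $\mathrm{supp}\,\psi_0$. Hence $\sum_i\phi_i\ge\phi_0$, monotonicity gives $\sum_i\varepsilon_{\bfx_i,R}\ge\varepsilon_{\bfo,R_0}$, and dividing by $n\,T\,R^3$ while using $1/n\ge (R/R_0)^3/K_1$ produces $\varepsilon_R\ge\varepsilon_0/K_1$, which is the intended lower bound.

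The main obstacle is the bookkeeping for the boundary balls $B(\bfx_0,R)\not\subset B(\bfo,R_0)$, whose cut-offs are extended along cones out to $S(\bfo,2R_0)$; this is exactly where the clean multiplicity count can fail. On the lower-bound side I must check that the cones emanating from those boundary balls actually cover the shell region where $\psi_0>0$, so that $\sum_i\phi_i\ge\phi_0$ survives there. On the upper-bound side the cones are fattened by the radial dilation (a factor of at most $2$ in each linear direction out to radius $2R_0$), so the local multiplicity of the extended supports can exceed $K_2$; controlling this overlap is what forces the constant to be enlarged from $K_2$ to $K=8^3K_2$ in $\mR3$, the $8^3$ absorbing the worst-case dilation of the conical cross-sections. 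Once this geometric multiplicity estimate is in hand, the two displayed inequalities are immediate from monotonicity and the cardinality bounds.
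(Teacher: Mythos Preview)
Your lower bound is exactly the paper's argument: $\sum_i\phi_i-\phi_0\ge 0$, apply the local energy inequality (equivalently, monotonicity of the non-negative distribution $D(\bfu)$), and then use the cardinality bound on $n$. Both you and the paper arrive at $\varepsilon_R\ge\varepsilon_0/K_1$ (the $K_1$ in the displayed statement is evidently a misprint for $1/K_1$, consistent with the Introduction's version of the lemma).

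Your upper bound, however, takes a genuinely different route. You try to establish a \emph{global} pointwise inequality $\sum_i\phi_i\le C\,\phi_0$ and then invoke monotonicity once. The paper does \emph{not} do this. Instead it works by \emph{decomposition of the index set}: it introduces a cubic lattice of mesh $R/2$ inside $B(\bfo,R_0)$, partitions it into $8^3$ sub-lattices of mesh $4R$, and observes that balls of the cover attached to distinct points of a single sub-lattice have pairwise disjoint doubled balls. For each such disjoint family $\{\phi_{i_j}\}$ one has $\phi_0-\sum_j\phi_{i_j}\ge 0$, hence $\sum_j\varepsilon_{\bfx_{i_j},R}\le TR_0^3\,\varepsilon_0$; the factor $K_2$ then enters because each lattice point can select up to $K_2$ cover balls, and the factor $8^3$ counts the sub-lattices. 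So in the paper the $8^3$ is purely combinatorial (number of cosets of the coarse lattice in the fine one) and has nothing to do with the conical boundary extensions.

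Your own explanation of the $8^3$---that it absorbs the dilation of the cones in the shell---is therefore not what the paper does, and it is also not really needed for your approach: since every $\psi_i$ satisfies $0\le\psi_i\le\psi_0$ by construction, and since the radial projection of a shell point onto $S(\bfo,R_0)$ lands in $B(\bfo,R_0)$ where the $K_2$ multiplicity bound applies, your direct estimate actually yields $\sum_i\psi_i\le K_2\,\psi_0$ with no extra factor. Thus your route is in principle sharper; what is missing is a clean verification of the multiplicity bound on the shell (separating interior doubled balls from conical supports), which you flag but do not carry out. The paper's lattice argument sidesteps this bookkeeping entirely at the price of the larger constant $8^3K_2$.
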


\begin{proof}
Let $\phi_{i}=\phi_{\bfx_{i},R}$ be a smooth cut-off function associated with
 $B(\bfx_{i},R)$
as described in (\ref{eta_def}).

\medskip

To prove the first inequality in (\ref{Phi_infty_est}), note that $\tilde{\phi}=
\sum_{i}\phi_{i}-\phi_{0}\ge 0$, and so the local energy inequality
(\ref{lee}) written for $\tilde{\phi}$,
\[
0\le \iint\frac{1}{2}\left[ (|\bfu|^2+2p)\bfu\cdot\nabla(\sum_{i}\phi_{i}-\phi_0)
+|\bfu|^2\partial_t(\sum_{i}\phi_{i}-\phi_0)\right]\,d\bfx\,dt\;,
\]
implies
\[\begin{aligned}
\varepsilon_0&=\frac{1}{T}\frac{1}{R_0^3}
\iint\frac{1}{2}\left[|\bfu|^2\partial_t\phi_{0} +(|\bfu|^2+2p)\bfu\cdot\nabla\phi_{0}\right]
\,d\bfx\,dt\\
&\le\frac{1}{T}\frac{1}{R_0^3}
\iint\frac{1}{2}\left[|\bfu|^2\partial_t\Bigl(\sum_{i}\phi_{i}\Bigr) +(|\bfu|^2+2p)\bfu\cdot\nabla(\sum_{i}
\phi_{i})\right]\,d\bfx\,dt\\
&\le \frac{1}{K_{1}}\frac{1}{T}\frac{1}{R^3}\frac{1}{n}\sum\limits_{i}\iint\frac{1}{2}
\left[|\bfu|^2\partial_t\phi_{i} +(|\bfu|^2+2p)\bfu\cdot\nabla\phi_{i}\right]\,d\bfx\,dt=
\frac{1}{K_{1}}\varepsilon_R\;,
\end{aligned}
\]
where we used the following consequence of the definition of a
$(K_1,K_2)$-cover,
\[\frac{1}{R_{0}^{3}}\le \frac{1}{K_{1}}\frac{1}{R^{3}}\frac{1}{n}\,.\]

\medskip

To prove the second inequality in (\ref{Phi_infty_est}),
 let $\{\bfx_{i_j}\}$ be a subset of
 $\{\bfx_i\}_{i=1,n}$ such that
interiors of the balls $B(\bfx_{i_j},2R)$ are pairwise disjoint.
Using (\ref{anomal-diss}), we obtain
\begin{equation}\label{lem_eq1}
\begin{aligned}
TR_0^3\,\varepsilon_0=&\iint\left[ (\frac{1}{2}|\bfu|^2+p)\bfu\cdot\nabla\phi_0
+\frac{1}{2}|\bfu|^2\partial_t\phi_0\right]\,d\bfx\,dt\;
\end{aligned}
\end{equation}
and
\begin{equation}\label{lem_eq2}
\begin{aligned}
\sum\limits_{j}\varepsilon_{\bfx_{i_j},R}=
&\iint\left[(\frac{1}{2}|\bfu|^2+p)\bfu\cdot\nabla(\sum\limits_{j}\phi_{i_j})
+ \frac{1}{2}|\bfu|^2\partial_t(\sum\limits_{j}\phi_{i_j})\right]\,d\bfx dt\,.
\end{aligned}
\end{equation}

In this scenario
\[
\tilde{\phi}=\phi_0-\sum\limits_{j}\phi_{i_j}\ge0\;;
\]
hence, by the local energy inequality (\ref{lee}),
\begin{equation}\label{lem_eq3}
\begin{aligned}
0\le&\iint(\frac{1}{2}|\bfu|^2+p)\bfu\cdot\nabla\tilde{\phi}\,d\bfx\,dt+
\frac{1}{2}\iint|\bfu|^2\partial_t\tilde{\phi}\,d\bfx\,dt\;.
\end{aligned}
\end{equation}
If we add relations (\ref{lem_eq2}) and (\ref{lem_eq3}) and then subtract
(\ref{lem_eq1}) we obtain
\begin{equation}\label{claim1}
\sum\limits_{j}\varepsilon_{\bfx_{i_j},R}\le TR_0^3\,\varepsilon_0\;.
\end{equation}

Let $\mathcal{L}$ be a cubic lattice inside $B(\bfo,R_0)$ with the
points situated at the vertices of cubes of side $R/2$. (Note that
this lattice can be chosen such that the number of points in it is
between $2^3(R_0/R)^3$ and $(4\pi/3)2^3(R_0/R)^3$.)

\medskip

Since the cover $\{B(\bfx_i,R)\}$ is a $(K_1,K_2)$-cover, each point
in $\mathcal{L}$ is contained in at most $K_2$ balls. Moreover, any
ball in the cover will contain at least one point from the lattice.

\medskip

If $\mathcal{L}'$ is sub-lattice of $\mathcal{L}$ with points at
vertices of cubes of side $4R$, then the interiors of balls of
radius $2R$ containing different points of $\mathcal{L}'$ are
pairwise disjoint, and thus if we denote by $B(\bfx_{i_p},R)$ a ball
from the cover $\{B(\bfx_i,R)\}$ containing the point
$p\in\mathcal{L}'$, by (\ref{lem_eq3}),
\[\sum\limits_{p\in\mathcal{L}'}\varepsilon_{\bfx_{i_p},R}\le TR_0^3\,\varepsilon_0\;.\]

Note that for each point $p\in\mathcal{L}'$ there are at most $K_2$ choices for
$B(\bfx_{i_p},R)$.
So
\[
\sum\limits_{i:B(\bfx_{i},R)\cap\mathcal{L}'
\not=\emptyset}\varepsilon_{\bfx_{i},R}\le K_2TR_0^3\,\varepsilon_0\;.
\]
Clearly $\mathcal{L}$  can be written as a union of $8^3=256$
sub-lattices $\mathcal{L}'_k$, $k=1,\dots,256$, each
$\mathcal{L}'_k$ having the same properties as $\mathcal{L}'$.
Thus,
\[
\sum\limits_{i=1}^{n}\varepsilon_{\bfx_{i},R}\le8^3K_2TR_0^3\,\varepsilon_0\;.
\]
Consequently,
\[\varepsilon_R=\frac{1}{T}\frac{1}{R^3}\frac{1}{n}\sum\limits_{i=1}^{n}\varepsilon_{\bfx_{i},R}\le
8^3K_2\left(\frac{R_0}{R}\right)^3\frac{1}{n}\,\varepsilon_0\le
8^3K_2\,\varepsilon_0\; .\]
\end{proof}

\begin{obs}
{\em Note that the \emph{defect in the local energy inequality} can
be interpreted either as the anomalous dissipation or as the
anomalous flux. The second interpretation was adopted -- in the
context of 3D viscous flows -- in \cite{DG10} which contains a proof
of the upper bound in the lemma interpreted as an upper bound on the
averaged anomalous fluxes.}
\end{obs}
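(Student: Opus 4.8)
The plan is to make precise the single fact underlying the stated equivalence: the \emph{defect} $\varepsilon_{\bfx_0,R}$ of the local energy inequality is one nonnegative object, and the terms ``anomalous dissipation'' and ``anomalous flux'' merely name two readings of it. With the test function $\phi=\eta\psi$ fixed, recall from (\ref{anomal-diss}) that
\[
\varepsilon_{\bfx_0,R}=\frac{1}{2}\iint|\bfu|^2\partial_t\phi\,d\bfx\,dt+\iint\Bigl(\frac{1}{2}|\bfu|^2+p\Bigr)\bfu\cdot\nabla\phi\,d\bfx\,dt\ge0
\]
by (\ref{lee}). The dissipation reading is immediate from Duchon and Robert: testing the local energy equality (\ref{du}) with $\nu=0$ against $\phi$ and integrating by parts moves every derivative onto $\phi$, leaving $\varepsilon_{\bfx_0,R}=(D(\bfu),\phi)$. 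Thus the defect \emph{is} the anomalous dissipation distribution $D(\bfu)$ evaluated on $\phi$ -- exactly the identification $\varepsilon_{\bfx_i,R}=(D_i(\bfu),\phi_i)$ used throughout.

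For the flux reading I would exploit the product structure $\phi=\eta(t)\psi(\bfx)$. Since $\nabla\phi=\eta\,\nabla\psi$ and $\partial_t\phi=\eta'\psi$, the defect splits as
\[
\varepsilon_{\bfx_0,R}=\int\eta'(t)\,E(t)\,dt+\int\Phi_{\bfx_0,R}(t)\,dt,\qquad E(t)=\frac{1}{2}\int|\bfu|^2\psi\,d\bfx,
\]
with $\Phi_{\bfx_0,R}$ the localized inward flux of (\ref{fluxdef}). The second term is the time-integrated flux through the shell between $S(\bfx_0,2R)$ and $S(\bfx_0,R)$; the first is a boundary-in-time storage term carried by $\eta'$. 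In the viscous setting of \cite{DG10} the same computation starting from (\ref{du}) produces, in addition, the two viscous terms; there one isolates the transport contribution as the \emph{flux} and reads the defect as an \emph{anomalous flux}, whereas here, with $\nu=0$, the identical defect is read as \emph{anomalous dissipation}. This is the precise content of the assertion.

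The last step is to observe why the upper bound of Lemma \ref{anom_flux_lem}, namely $\varepsilon_R\le K\varepsilon_0$, transfers verbatim between the two readings. Inspecting its proof, the only inputs are the sign $\varepsilon_{\bfx_0,R}\ge0$ and the multiplicity structure of a $(K_1,K_2)$-cover -- used through the partition of the lattice $\mathcal{L}$ into $8^3$ disjoint sub-lattices and the bound $K_2$ on the number of cover balls containing a given lattice point; the specific meaning of the defect never enters. Consequently the same combinatorial argument bounds the averaged anomalous \emph{fluxes} when the defect is read via the flux decomposition above -- this is the bound proved in \cite{DG10} -- and the averaged anomalous \emph{dissipation} when it is read via Duchon--Robert, as here. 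I expect the only delicate point to be insisting that one compares the \emph{same} defect distribution in both papers: the flux and dissipation forms differ only by the $\eta'$-storage term and, in the viscous case, by the viscous contributions, none of which affects the nonnegativity that drives the combinatorics. This is exactly the generality already recorded in the Remark that the argument applies to any nonnegative distribution.
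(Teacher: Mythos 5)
Your proposal is correct and takes essentially the same route as the paper: since the statement is a remark rather than a theorem, its justification rests precisely on the ingredients you assemble --- the identification $\varepsilon_{\bfx_i,R}=\bigl(D_i(\bfu),\phi_i\bigr)$ already stated in the Introduction, the storage-plus-flux splitting that appears as identity (\ref{ene-eq}), and the observation (recorded explicitly in the paper's remark following the introductory lemma) that the combinatorial proof of Lemma \ref{anom_flux_lem} uses only the nonnegativity of the defect and the $(K_1,K_2)$-cover multiplicities, hence transfers verbatim to the anomalous-flux reading adopted in \cite{DG10}. Your elaboration faithfully reconstructs this intended content with no gaps.
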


\section{Energy Cascade}\label{balls}

Let $\{B(\bfx_i,R)\}_{i=1,n}$ be a $(K_1,K_2)$-cover of
$B(\bfo,R_0)$ at scale $R$. Definitions of $\varepsilon_{R}$ and
$\Phi_{R}$ -- (\ref{E_R_def}) and (\ref{Phi_R_def}) -- imply

\begin{equation}\label{ene-eq}
\Phi_{R}=\varepsilon_R-\frac{1}{n}\sum\limits_{i=1}^{n}\frac{1}{T}\frac{1}{R^3}\iint\frac{1}{2}
|\bfu|^2\partial_t\phi_i\,d\bfx\,dt\;
\end{equation}
where $\phi_i=\eta\psi_i$ and $\psi_i$ is the spatial cut-off on
$B(\bfx_i,2R)$ satisfying (\ref{eta_def}).

\begin{equation}\label{phi_t-bd}
|(\phi_i)_t|=|\eta_t\psi_i|\le C_0\frac{1}{T}\eta^{\delta}\psi_i\le
\frac{C_0}{T}\phi_i^{\delta}\,;
\end{equation}
hence,
\[\Phi_R\ge  \varepsilon_R - \frac{C_0}{T}\,\bfe_R.\]

The definition of $(K_1,K_2)$-covers at scale $R$ and the design of
$\psi_i$ paired with (\ref{e_R_def}) imply
\begin{equation}\label{e_R_e_ineq}
\bfe_R\le{K_2}\bfe_0\;,
\end{equation}
while Lemma \ref{anom_flux_lem} states
\begin{equation}\label{E_R_E_ineq}
\varepsilon_R\ge {K_1}\varepsilon_0.
\end{equation}

Consequently,
\begin{equation}\label{low_bd}
\Phi_R\ge  {K_1}\varepsilon_0 -\frac{C_0K_2}{T}\,\bfe_0 \ge
K_{1}\varepsilon_0\, \left(1-c_1\frac{\tau_0^2}{R_{0}^2}\right)
\end{equation}
with $c_1=C_0K_2/K_{1}$.

\medskip

Suppose that
\begin{equation}\label{scales_con}
\tau_0< \frac{\gamma}{c_1^{1/2}}R_0
\end{equation}
for some $0<\gamma<1$. Then, for any $R \le R_0$, (\ref{low_bd}) yields
\begin{equation}\label{lower_bd}
\Phi_R\ge{K_{1}}(1-\gamma^2)\varepsilon_0=c_{0,\gamma}\varepsilon_0\;
\end{equation}
where
\begin{equation}
c_{0,\gamma}={K_1}(1-\gamma^2)\;.
\end{equation}

\medskip

To obtain an upper bound on the averaged flux, we utilize the
estimates (\ref{phi_t-bd}) and (\ref{e_R_e_ineq}) in the identity
(\ref{ene-eq}) again, this time together with the upper bound in
Lemma \ref{anom_flux_lem} to obtain
\[\Phi_R\le  \varepsilon_R+\frac{C_0}{T_{0}}\bfe_R\le K\varepsilon_0+
C_0K_2\frac{1}{T}\,\bfe_0.\]
If the condition (\ref{scales_con}) holds for some $0<\gamma<1$,
then it follows that for any $R\le R_0$,
\begin{equation}
\Phi_R\le  K\varepsilon_0 +
\frac{C_0K_2\gamma^2}{c_1}\varepsilon_0\le
c_{1,\gamma}\varepsilon_0\;
\end{equation}
where
\begin{equation}
c_{1,\gamma}=K \left[1+\frac{C_0K_{2}\gamma^2}{c_1K}\right]=K
\left[1+\frac{K_1}{K}\gamma^2\right]\;.
\end{equation}

Thus we have proved the following.

\begin{thm}\label{balls_thm}
Assume that for some $0<\gamma<1$
\begin{equation}\label{scales_con_fin}
\tau_0< {\gamma}c\,R_0\;,
\end{equation}
where
\begin{equation}\label{c_con1}
c=\sqrt{\frac{K_{1}}{C_0K_2}}\;.
\end{equation}
Then, for all $R$,
\begin{equation}\label{inert_range}
0 < R\le R_0,
\end{equation}
the averaged energy flux $\Phi_R$ satisfies
\begin{equation}\label{ener_casc}
c_{0,\gamma}\varepsilon_0\le\Phi_R\le c_{1,\gamma} \varepsilon_0\;
\end{equation}
where
\begin{equation}\label{c_con2}
c_{0,\gamma}=K_{1}({1-\gamma^2})\,, \quad
c_{1,\gamma}=K \left[1+\frac{K_1}{K}\gamma^2\right]\;,
\end{equation}
and the average $\lgl\cdot\rgl_R$ is computed over a time interval
$(0,2T)$ and determined by a $(K_1,K_2)$-cover of $B(\bfo,R_0)$ at
scale $R$.
\end{thm}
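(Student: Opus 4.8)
The plan is to read off both inequalities in (\ref{ener_casc}) from the flux identity (\ref{ene-eq}), treating the averaged anomalous dissipation $\varepsilon_R$ as the principal term and the averaged rate of change of localized energy as a perturbation that the Taylor-scale hypothesis forces to be small. First I would record the identity
\[
\Phi_R=\varepsilon_R-\frac{1}{n}\sum_{i=1}^{n}\frac{1}{T}\frac{1}{R^3}\iint\frac{1}{2}|\bfu|^2\partial_t\phi_i\,d\bfx\,dt,
\]
which is immediate from comparing the definitions (\ref{Phi_R_def}) and (\ref{E_R_def}): the flux collects exactly the spatial-gradient contributions to $\varepsilon_{\bfx_i,R}$, so their difference is the time-derivative term. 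The decisive structural point --- and the reason the estimate will hold at every scale --- is that no growth condition is imposed on the spatial derivatives of $\psi_i$, hence no power of $R$ enters the perturbation.

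Next I would control that perturbation uniformly in $R$. The design condition $|\eta'|/\eta^{\delta}\le C_0/T$ from (\ref{eta_def}) yields (\ref{phi_t-bd}), namely $|\partial_t\phi_i|\le(C_0/T)\phi_i^{\delta}$, so the perturbation is bounded in modulus by $(C_0/T)\bfe_R$; the energy comparison (\ref{e_R_e_ineq}), $\bfe_R\le K_2\bfe_0$ --- a consequence of the non-negativity of the energy density together with the local multiplicity bound --- then bounds the error by $(C_0K_2/T)\bfe_0$, independently of $R$. Using the definition (\ref{tau_def}) of $\tau_0$ one rewrites $(C_0K_2/T)\bfe_0=c_1(\tau_0^2/R_0^2)K_1\varepsilon_0$ with $c_1=C_0K_2/K_1$, and since $c=\sqrt{K_1/(C_0K_2)}=c_1^{-1/2}$, the hypothesis (\ref{scales_con_fin}) is precisely $c_1\tau_0^2/R_0^2<\gamma^2$.

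It then remains to insert the two-sided comparison of Lemma \ref{anom_flux_lem}, $K_1\varepsilon_0\le\varepsilon_R\le K\varepsilon_0$. For the lower bound I combine $\varepsilon_R\ge K_1\varepsilon_0$ with the error estimate to obtain $\Phi_R\ge K_1\varepsilon_0\bigl(1-c_1\tau_0^2/R_0^2\bigr)\ge K_1(1-\gamma^2)\varepsilon_0=c_{0,\gamma}\varepsilon_0$. For the upper bound I combine $\varepsilon_R\le K\varepsilon_0$ with the same error estimate to get $\Phi_R\le K\varepsilon_0+(C_0K_2\gamma^2/c_1)\varepsilon_0=K\bigl[1+(K_1/K)\gamma^2\bigr]\varepsilon_0=c_{1,\gamma}\varepsilon_0$, where I used $C_0K_2/c_1=K_1$.

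Granting Lemma \ref{anom_flux_lem}, the argument above is essentially bookkeeping, so I expect the genuine difficulty to lie upstream in that lemma --- which furnishes the scale-uniform lower bound $\varepsilon_R\ge K_1\varepsilon_0$ from the non-negativity of the Duchon--Robert distribution via a lattice-packing argument. The one point I would watch within the theorem itself is that every constant produced is independent of $R$; this uniformity is exactly what upgrades (\ref{ener_casc}) from a single-scale estimate to a cascade holding across the whole inertial range $0<R\le R_0$.
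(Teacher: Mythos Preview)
Your proof is correct and follows essentially the same route as the paper: the flux identity (\ref{ene-eq}), the time-derivative bound (\ref{phi_t-bd}), the energy comparison (\ref{e_R_e_ineq}), and the two-sided estimate of Lemma \ref{anom_flux_lem} are combined exactly as in the text, with the Taylor-scale hypothesis rewritten as $c_1\tau_0^2/R_0^2<\gamma^2$. One minor inaccuracy in your closing commentary: in the paper the lattice-packing argument is used for the \emph{upper} bound $\varepsilon_R\le K\varepsilon_0$, while the lower bound $\varepsilon_R\ge K_1\varepsilon_0$ follows from the simpler observation that $\sum_i\phi_i-\phi_0\ge 0$ together with the local energy inequality.
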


As already noted in the previous section -- in the case the global
energy is non-increasing -- the positivity of the local flux
$\Phi_{\bfx_i, R}$ implies transfer of the kinetic energy around the
point $\bfx_i$ at scale $R$ from larger to smaller scales. Since we
assume no spatial homogeneity, the positivity of the flux expressed
in the theorem holds only in the averaged sense over the integral
domain. This may cause some uneasiness, as the transfer of the
kinetic energy around the point $\bfx_i$ at scale $R$ is, in
general, not necessarily dominantly local; this is due to the fact
that the pressure -- in terms of the velocity of the flow -- is
given by a non-local operator. In order to ensure existence of a
\emph{bona fide} (kinetic) energy cascade, we need \emph{locality}
in the sense of turbulence phenomenology, i.e., for the averaged
flux at the given scale to be well-correlated only with the averaged
fluxes at nearby scales, \emph{throughout the inertial range}. This
is in fact true, and is a simple consequence of the universality of
the averaged fluxes \emph{per unit mass} $\Phi_R$ displayed in
Theorem 3.1. More precisely, denoting by $\widetilde{\Phi}_R$ the
time-averaged ensemble average of the local fluxes $\Phi_{\bfx_i,
R}$,
\[
\widetilde{\Phi}_R=\frac{1}{T}\int \frac{1}{n}\sum\limits_{i=1}^{n}
\Phi_{\bfx_i, R}(t)\,dt = R^3 \Phi_R,
\]
the following holds.

\begin{thm}
Let $R$ and $r$ be two spatial scales within the inertial range
obtained in Theorem 3.1. Then,
\[
\frac{c_{0,\gamma}}{c_{1,\gamma}}\biggl(\frac{R}{r}\biggr)^3 \le
\frac{\widetilde{\Phi}_R}{\widetilde{\Phi}_r} \le
\frac{c_{1,\gamma}}{c_{0,\gamma}}\biggl(\frac{R}{r}\biggr)^3
\]
where the constants are the same as in Theorem \ref{balls_thm}.
\end{thm}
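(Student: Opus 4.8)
The plan is to reduce the claim entirely to the two-sided bound (\ref{ener_casc}) established in Theorem \ref{balls_thm}, exploiting the elementary scaling relation $\widetilde{\Phi}_R = R^3\,\Phi_R$ recorded just above the statement. Since $\Phi_R$ is the flux \emph{per unit mass} while $\widetilde{\Phi}_R$ is the (time-averaged, ensemble-averaged) total flux, the two differ precisely by the volume factor $R^3$; hence the ratio of total fluxes at two scales factors as a purely geometric piece times the ratio of the normalized fluxes, and the entire content of the theorem lies in controlling that second ratio.

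First I would write, using $\widetilde{\Phi}_R = R^3\Phi_R$ and $\widetilde{\Phi}_r = r^3\Phi_r$,
\[
\frac{\widetilde{\Phi}_R}{\widetilde{\Phi}_r}
= \left(\frac{R}{r}\right)^3 \frac{\Phi_R}{\Phi_r}\,,
\]
so that the problem is reduced to bounding $\Phi_R/\Phi_r$ from above and below. Next, since both $R$ and $r$ lie in the inertial range $(0,R_0]$, Theorem \ref{balls_thm} applies to each, giving
\[
c_{0,\gamma}\varepsilon_0 \le \Phi_R \le c_{1,\gamma}\varepsilon_0
\quad\text{and}\quad
c_{0,\gamma}\varepsilon_0 \le \Phi_r \le c_{1,\gamma}\varepsilon_0\,.
\]
In the nondegenerate cascade regime $\varepsilon_0>0$ the lower bound $\Phi_r \ge c_{0,\gamma}\varepsilon_0>0$ (recall $c_{0,\gamma}=K_1(1-\gamma^2)>0$ for $0<\gamma<1$) makes the ratio well-defined. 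Dividing the upper bound for $\Phi_R$ by the lower bound for $\Phi_r$, and then the reverse, and observing that the common factor $\varepsilon_0$ cancels, yields $c_{0,\gamma}/c_{1,\gamma} \le \Phi_R/\Phi_r \le c_{1,\gamma}/c_{0,\gamma}$; multiplying through by $(R/r)^3$ produces the asserted inequalities with the same constants as in Theorem \ref{balls_thm}.

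There is no real analytic obstacle left to overcome here: all the substance — the non-negativity of the Duchon--Robert distribution, the ensemble-averaging estimate of Lemma \ref{anom_flux_lem}, and the comparison of $\Phi_R$ with $\varepsilon_0$ — has been front-loaded into Theorem \ref{balls_thm}. The single point I would stress is conceptual rather than technical: the cancellation of $\varepsilon_0$ is exactly what realizes \emph{scale locality}, since the resulting bound on $\widetilde{\Phi}_R/\widetilde{\Phi}_r$ depends neither on the (unknown) strength of the anomalous dissipation nor on the absolute scale, but only on the cover constants through $c_{0,\gamma}$ and $c_{1,\gamma}$. Specializing to the dyadic ladder $r = 2^k R$ turns the geometric factor into $(R/r)^3 = 2^{-3k}$, which is the exponential-in-$k$ decorrelation between distant scales predicted by turbulence phenomenology and alluded to at the close of the Introduction.
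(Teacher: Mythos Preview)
Your proposal is correct and is precisely the argument the paper intends: the paper does not even write out a separate proof, stating only that the result ``is a simple consequence of the universality of the averaged fluxes \emph{per unit mass} $\Phi_R$ displayed in Theorem 3.1,'' which is exactly the reduction you carry out via $\widetilde{\Phi}_R = R^3\Phi_R$ and the two-sided bound (\ref{ener_casc}).
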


\begin{obs}
{\em In particular, in the dyadic case -- $r=2^kR$ -- both the
infrared and the ultraviolet locality propagate exponentially in the
dyadic parameter $k$. More precisely,
\[
\frac{c_{0,\gamma}}{c_{1,\gamma}} 2^{-3k} \le
\frac{\widetilde{\Phi}_R}{\widetilde{\Phi}_{2^kR}} \le
\frac{c_{1,\gamma}}{c_{0,\gamma}} 2^{-3k}.
\]
Note that the ultraviolet locality in the inviscid case is more
pronounced than in the viscous case as the cascade here continues
\emph{ad infinitum} ($r \to 0 \ \Leftrightarrow k \to -\infty$). }
\end{obs}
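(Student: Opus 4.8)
The plan is to recognize that the statement is essentially a direct corollary of Theorem \ref{balls_thm}: all the analytic effort has already been spent in establishing the scale-uniform two-sided bound (\ref{ener_casc}), and what remains is purely algebraic. The point of departure is the elementary identity $\widetilde{\Phi}_R = R^3 \Phi_R$ recorded just above the statement, which expresses the un-normalized time-averaged ensemble flux in terms of the flux \emph{per unit mass} $\Phi_R$ that Theorem \ref{balls_thm} actually controls.

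First I would factor the ratio of interest so as to separate the geometric content from the dynamical content:
\[
\frac{\widetilde{\Phi}_R}{\widetilde{\Phi}_r} = \left(\frac{R}{r}\right)^3 \frac{\Phi_R}{\Phi_r}.
\]
Here the factor $(R/r)^3$ is the exact Kolmogorov-type scaling one expects, and the entire locality question reduces to bounding the residual ratio $\Phi_R/\Phi_r$ of normalized fluxes from above and below by \emph{scale-independent} constants.

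Next, since both $R$ and $r$ lie in the inertial range (\ref{inert_range}), Theorem \ref{balls_thm} provides $c_{0,\gamma}\varepsilon_0 \le \Phi_R \le c_{1,\gamma}\varepsilon_0$ and the identical estimate for $\Phi_r$. The crucial feature is that the bounding constants $c_{0,\gamma}$, $c_{1,\gamma}$ and the reference dissipation $\varepsilon_0$ do not depend on the scale. Dividing the upper bound for $\Phi_R$ by the lower bound for $\Phi_r$ (and vice versa) cancels $\varepsilon_0$ and yields
\[
\frac{c_{0,\gamma}}{c_{1,\gamma}} \le \frac{\Phi_R}{\Phi_r} \le \frac{c_{1,\gamma}}{c_{0,\gamma}};
\]
inserting this into the factorization above gives the claimed inequalities.

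There is no genuine obstacle to surmount in this last step: the whole difficulty of locality is absorbed into the scale-uniformity of the bounds in Theorem \ref{balls_thm}, which itself rests on the comparability of the ensemble-averaged anomalous dissipation across scales established in Lemma \ref{anom_flux_lem}. The only detail requiring a word of care is that $\Phi_r > 0$, so that the ratio is well-defined and the inequalities may legitimately be divided; this is immediate from the lower bound $\Phi_r \ge c_{0,\gamma}\varepsilon_0 > 0$, valid in the nondegenerate cascade regime $\varepsilon_0 > 0$. The dyadic specialization $r = 2^k R$ recorded in the subsequent remark then follows at once by substituting $(R/r)^3 = 2^{-3k}$, which exhibits the exponential propagation of both the infrared and ultraviolet locality in the dyadic parameter $k$.
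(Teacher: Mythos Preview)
Your proposal is correct and follows exactly the route the paper takes: the remark is an immediate specialization of Theorem~3.2 (itself a direct corollary of Theorem~\ref{balls_thm} via $\widetilde{\Phi}_R = R^3 \Phi_R$), and the paper offers no argument beyond this, so your algebraic derivation is precisely what is intended.
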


\begin{obs}\label{Rem_4.2}{\em
In the language of turbulence, the condition (\ref{scales_con_fin})
simply reads that the
(anomalous) Taylor \emph{micro-scale} computed over the
domain in view is smaller than the \emph{integral scale} (diameter
of the domain). }\end{obs}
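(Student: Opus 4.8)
The statement is purely interpretive --- a dictionary entry translating the analytic hypothesis (\ref{scales_con_fin}) into the vocabulary of turbulence phenomenology --- so the plan is to verify, by dimensional analysis, that the quantity $\tau_0$ defined in (\ref{tau_def}) is the genuine anomalous analogue of the classical Taylor micro-scale, and then to observe that (\ref{scales_con_fin}) asserts exactly that this micro-scale lies, by a fixed universal factor, below the integral scale $R_0$. First I would recall the classical definition: in viscous isotropic turbulence the Taylor micro-scale $\lambda$ is fixed by the balance $\varepsilon \sim \nu\, E/\lambda^2$ between the energy $E$ per unit mass and the dissipation rate $\varepsilon$ per unit mass, so that $\lambda = \bigl(\nu\, E/\varepsilon\bigr)^{1/2}$; phenomenologically it marks the lower edge of the inertial range, where dissipation overtakes transfer.

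In the inviscid regime there is no molecular viscosity, and the only quantity with the dimensions of a kinematic viscosity (length$^2$/time) that can be manufactured from the macroscopic data of the problem is $R_0^2/T$, the square of the integral length divided by the integral time --- the natural integral-scale eddy viscosity. Performing the substitutions $\nu \to R_0^2/T$, $E \to \bfe_0$ (the averaged energy (\ref{e_def})) and $\varepsilon \to \varepsilon_0$ (the averaged anomalous dissipation (\ref{E_def})) turns the classical formula into $\bigl((R_0^2/T)\,\bfe_0/\varepsilon_0\bigr)^{1/2}$, which is precisely (\ref{tau_def}). A quick unit count confirms consistency: $\bfe_0$ carries dimension (length/time)$^2$ and $\varepsilon_0$ dimension (length/time)$^2/$time, so $\bfe_0/\varepsilon_0$ is a time, $R_0^2/T$ a viscosity, and $\tau_0$ indeed a length, with the dimensionless ratio $\tau_0^2/R_0^2 = \bfe_0/(T\varepsilon_0)$ --- exactly the energy-to-(integrated-)dissipation ratio already isolated in the passage to (\ref{low_bd}).

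The one point requiring genuine care --- and the only content beyond bookkeeping --- is to check that the condition is a \emph{bona fide} statement about the single ratio $\tau_0/R_0$ rather than a disguised constraint carrying hidden dependence on the flow or on the scale. Here I would note that in (\ref{scales_con_fin}) the proportionality factor is $\gamma\,c$, with $0<\gamma<1$ arbitrary and $c = \sqrt{K_1/(C_0K_2)}$ (see (\ref{c_con1})) depending only on the cover multiplicities $K_1, K_2$ and the cut-off constant $C_0$ from (\ref{eta_def}) --- all of them fixed, dimensionless, and independent of both $R_0$ and $\bfu$. Consequently (\ref{scales_con_fin}) reads $\tau_0/R_0 < \gamma c$, i.e. up to a universal multiplicative constant simply $\tau_0 < R_0$: the anomalous Taylor micro-scale is dominated by the integral scale. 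I would close by recording the phenomenological meaning --- that this separation is exactly what keeps the breaking term $\frac{C_0K_2}{T}\bfe_0$ a controlled fraction of the driving term $K_1\varepsilon_0$ throughout $0 < R \le R_0$, thereby guaranteeing a nonempty inertial range and, in the inviscid case, its extension \emph{ad infinitum}.
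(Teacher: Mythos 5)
This statement is a remark rather than a theorem --- the paper supplies no proof beyond the definitions (\ref{tau_def}), (\ref{c_con1}) and the condition (\ref{scales_con_fin}) themselves --- and your unpacking is exactly the intended reading: $\tau_0$ is the classical Taylor scale $(\nu\,\bfe_0/\varepsilon_0)^{1/2}$ with the unavailable viscosity replaced by $R_0^2/T$, and (\ref{scales_con_fin}) is the flow- and scale-independent dimensionless statement $\tau_0/R_0<\gamma c$ with $c$ depending only on $C_0,K_1,K_2$. Your dimensional bookkeeping, the identity $\tau_0^2/R_0^2=\bfe_0/(T\varepsilon_0)$, and its role in keeping the term $\frac{C_0K_2}{T}\bfe_0$ a controlled fraction of $K_1\varepsilon_0$ in (\ref{low_bd}) are all correct and coincide with the paper's implicit justification.
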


\begin{obs}
{\em Unlike in the Navier-Stokes case (cf. \cite{DG10}), we do not
impose an explicit constraint on the length of the time interval
$T$. Implicitly, $T$ is a part of cascade condition
(\ref{scales_con_fin}) through the definition of Taylor scale
$\tau_{0}$ (\ref{tau_def}).
 }
\end{obs}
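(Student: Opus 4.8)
The plan is to justify the remark by auditing every appearance of the time scale $T$ in the derivation of Theorem \ref{balls_thm} and checking that each such appearance is already packaged inside the anomalous Taylor scale $\tau_0$ of (\ref{tau_def}). First I would isolate the sole mechanism by which $T$ enters the estimates. Inspecting the proof, the only structural hypothesis on the temporal cut-off $\eta$ that feeds the flux bounds is the last condition in (\ref{eta_def}), namely $|\eta'|/\eta^\delta \le C_0/T$; this is precisely what produces the factor $C_0/T$ in (\ref{phi_t-bd}). Since the spatial cut-offs $\psi_i$ carry no growth assumption (as emphasized after the definition of $\psi$), no further $T$-dependence can enter through the spatial derivatives. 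Thus $T$ occurs only multiplying the averaged energy, via the term $\frac{C_0}{T}\bfe_R$ appearing in the identity (\ref{ene-eq}).

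Next I would track this single term through the two-sided bound. Combining (\ref{phi_t-bd}) with (\ref{e_R_e_ineq}) yields the breaking term $\frac{C_0 K_2}{T}\bfe_0$, and the heart of the matter is the algebraic identity
\[
\frac{C_0 K_2}{T}\bfe_0 = K_1\varepsilon_0\,\frac{C_0 K_2}{K_1 T}\frac{\bfe_0}{\varepsilon_0} = K_1\varepsilon_0\, c_1\,\frac{\tau_0^2}{R_0^2},
\]
which is exactly the passage used in (\ref{low_bd}) with $c_1 = C_0 K_2/K_1$. Because $\tau_0^2 = (R_0^2/T)(\bfe_0/\varepsilon_0)$, the factor $1/T$ is inseparable from the dimensionless ratio $\tau_0^2/R_0^2$; the same substitution controls the upper bound, where $T$ reappears only through the $\gamma^2$ obtained by invoking (\ref{scales_con}). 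Consequently the cascade hypothesis can be stated entirely as $\tau_0 < \gamma c R_0$ in (\ref{scales_con_fin}), with no standalone inequality on $T$, which is precisely the content of the remark.

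The contrast with the viscous case is what remains to be articulated, and this is the step I expect to be the main obstacle, since it requires recalling the argument of \cite{DG10} rather than anything internal to the present paper. There a second breaking term --- the one coming from $\nu\,\iint |\bfu|^2\Delta\phi$ --- carries an explicit spatial factor (an estimate on $\Delta\psi$ of order $R^{-2}$) that does \emph{not} combine with $T$; to keep this viscous term subordinate to the dissipation uniformly across the inertial range one is forced to bound $T$ separately, whence the explicit constraint there. In the inviscid setting this term is simply absent ($\nu=0$), so the temporal cut-off furnishes the only breaking mechanism and the clean absorption into $\tau_0$ goes through. I would close by noting that no genuine loss of generality results: for any admissible $T$ one is free to choose $\gamma\in(0,1)$ so that (\ref{scales_con_fin}) holds exactly when $\frac{1}{T}\frac{\bfe_0}{\varepsilon_0} < \gamma^2 K_1/(C_0 K_2)$, exhibiting $T$ as a parameter internal to the condition rather than an independent restriction.
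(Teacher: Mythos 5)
Your audit is correct and coincides with the paper's own (implicit) justification of the remark: $T$ enters the derivation of Theorem \ref{balls_thm} only through the temporal cut-off bound in (\ref{eta_def}) via (\ref{phi_t-bd}) (the spatial cut-offs deliberately carry no growth assumptions), and the resulting breaking term $\frac{C_0K_2}{T}\,\bfe_0$ is absorbed into the dimensionless ratio $\tau_0^2/R_0^2$ exactly as in (\ref{low_bd}), so the sole hypothesis is (\ref{scales_con_fin}). Your contrast with the viscous case --- where the term $\nu\iint|\bfu|^2\Delta\phi$ and its $R^{-2}$-scaling force a separate constraint on $T$ in \cite{DG10} --- likewise matches the paper's stated reasoning.
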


\section{Cascade near spatially isolated singularity}

Assuming certain \emph{geometric structure} of the singular set
leads to improved results on existence of the energy cascade in 3D
inviscid flows. In particular -- in the case of a \emph{spatially
isolated singularity} (the singular set being a curve
$\bfc=\bfc(t)$) -- it is enough to assume that the strict energy
inequality holds on some neighborhood of the singular curve. For
simplicity, we present the proof in the case the curve is a line
segment; the proof easily generalizes to the case of a smooth curve.

\medskip

The following lemma states that the anomalous dissipation is
constant on tubular neighborhoods of the singular
line segment.

\begin{lem}
Let $0 < R_1 < R_2, T > 0$, and let $\bfu$ be weak solution to the 3D Euler
equations on $B(\bfo, 2R_2) \times (0,2T)$ satisfying
the local energy inequality, smooth on
$\Bigl(B(\bfo, 2R_2) \setminus \{\bfo\}\Bigr) \times (0,2T)$.
Then
\[
 \varepsilon_{\bfo, R_1} = \varepsilon_{\bfo, R_2}.
\]
\end{lem}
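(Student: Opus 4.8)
The plan is to exploit that $\bfu$ is smooth away from the spatial origin, so that the only contribution to the anomalous dissipation comes from an arbitrarily thin spatial neighborhood of the singular segment $\{\bfo\}\times(0,2T)$; passing from scale $R_1$ to scale $R_2$ only modifies the cut-off in a \emph{shell} region where the flow is smooth and the local energy balance holds with \emph{no} defect. Accordingly, I would first write $\phi_j=\phi_{\bfo,T,R_j}=\eta(t)\,\psi_{\bfo,R_j}(\bfx)$ for $j=1,2$, noting that the temporal factor $\eta=\eta_T$ is common to both scales (it depends only on $T$), while the spatial cut-offs satisfy $\psi_{\bfo,R_j}=1$ on $B(\bfo,R_j)$ and $\psi_{\bfo,R_j}\in\mathcal{D}(B(\bfo,2R_j))$. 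Set $\tilde{\phi}=\phi_2-\phi_1=\eta\,(\psi_{\bfo,R_2}-\psi_{\bfo,R_1})$.

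The key structural observation is that on $B(\bfo,R_1)$ both spatial cut-offs equal $1$, so $\tilde{\phi}\equiv 0$ there; hence $\mbox{supp}(\tilde{\phi})$ is contained in the spatial shell $\{R_1\le|\bfx|\}\cap\mbox{supp}(\psi_{\bfo,R_2})$, a compact subset of $\bigl(B(\bfo,2R_2)\setminus\{\bfo\}\bigr)\times(0,2T)$ on which $\bfu$ is smooth by hypothesis. By linearity of the expression (\ref{anomal-diss}) in the test function,
\[
\varepsilon_{\bfo,R_2}-\varepsilon_{\bfo,R_1}=\iint\frac{1}{2}|\bfu|^2\partial_t\tilde{\phi}\,d\bfx\,dt+\iint\Bigl(\frac{1}{2}|\bfu|^2+p\Bigr)\bfu\cdot\nabla\tilde{\phi}\,d\bfx\,dt.
\]
I would then invoke that a smooth solution of the Euler equations satisfies the pointwise local energy \emph{equality} $\partial_t(\frac{1}{2}|\bfu|^2)+\mbox{div}\bigl((\frac{1}{2}|\bfu|^2+p)\bfu\bigr)=0$ (obtained by dotting the momentum equation with $\bfu$ and using $\mbox{div}\,\bfu=0$) throughout the open region of smoothness. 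Since $\tilde{\phi}$ is smooth and compactly supported inside that region, integrating the identity against $\tilde{\phi}$ — with no boundary contribution — shows the right-hand side above vanishes, yielding $\varepsilon_{\bfo,R_2}=\varepsilon_{\bfo,R_1}$. Equivalently, in the language of the Introduction, $\varepsilon_{\bfo,R}=(D(\bfu),\phi_{\bfo,R})$ and the Duchon--Robert distribution $D(\bfu)$ is supported on $\{\bfo\}$ in space (it vanishes wherever $\bfu$ is smooth), while $\phi_1$ and $\phi_2$ agree on the neighborhood $B(\bfo,R_1)\times(0,2T)$ of that support.

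The argument is essentially soft, since all manipulations take place in the smooth region; the content of the lemma is entirely the \emph{localization} of the defect to the singular segment. The one point that must be handled with care is verifying that $\tilde{\phi}$ really is supported away from the origin — this is precisely where the normalization $\psi_{\bfo,R}=1$ on $B(\bfo,R)$ enters, guaranteeing cancellation near $\bfo$ — together with the fact that $\eta\in\mathcal{D}(0,2T)$ keeps the support off the temporal endpoints, so that no boundary terms arise when passing from the pointwise identity to its integrated form. I do not expect any genuine analytic obstruction.
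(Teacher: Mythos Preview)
Your proposal is correct and matches the paper's proof essentially line for line: both form $\tilde{\phi}=\phi_2-\phi_1$, observe that its support lies in the region of smoothness (since both spatial cut-offs equal $1$ near $\bfo$), and then use that a smooth solution satisfies the local energy \emph{equality} there, so the defect paired with $\tilde{\phi}$ vanishes. Your write-up is in fact more explicit about the details---the common temporal factor $\eta$, the precise location of $\mbox{supp}\,\tilde{\phi}$, and the interpretation via $D(\bfu)$---but the argument is the same.
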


\begin{proof}
Let $\phi_1$ and $\phi_2$ be test functions on $B(\bfo, 2R_1)$ and
$B(\bfo, 2R_2)$, equal to 1 on  $B(\bfo, R_1)$ and $B(\bfo, R_2)$,
respectively. Since $\bfu$ is smooth on the support of
$\phi_2-\phi_1$, integration by parts yields
\[
\iint\Bigl(\frac{1}{2}|\bfu|^2+p\Bigr)\bfu\cdot\nabla(\phi_2-\phi_1)\,d\bfx\,dt +
\frac{1}{2}\iint|\bfu|^2 \partial_t(\phi_2-\phi_1) \,d\bfx\,dt\  = 0;
\]
hence, $ \varepsilon_{\bfo, R_1} = \varepsilon_{\bfo, R_2}$.
\end{proof}

The general theorem on existence of 3D inviscid energy cascade --
Theorem 3.1. -- paired with the above lemma yields the following
result on existence of the cascade in the neighborhood of a singular
line.

\begin{thm}
Let $R, T > 0$, and let $\bfu$ be weak solution to the 3D Euler
equations on $B(\bfo, 2R) \times (0,2T)$ satisfying
the local energy inequality, smooth on
$\Bigl(B(\bfo, 2R) \setminus \{\bfo\}\Bigr) \times (0,2T)$. Assume
that the strict energy inequality holds on
$B(\bfo, 2R^*) \times (0,2T)$ for some $R^*$,
$0 < R^* \le R$, i.e.,
$\varepsilon_{\bfo,R^*} > 0$.
Then, there exists $R_0^*$,
$0 < R_0^* \le R$ such that the cascade condition
(\ref{scales_con_fin}), $\tau_0< {\gamma}c\,R_0$,
holds for any $R_0$, $0 < R_0 \le R_0^*$.
\end{thm}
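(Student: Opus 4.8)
The plan is to reduce the cascade condition (\ref{scales_con_fin}) to a statement about a single shrinking energy integral, and then to exploit the sharp dichotomy that the anomalous dissipation stays \emph{constant} as the scale shrinks to the singularity while the localized energy \emph{vanishes}.

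First I would invoke the preceding lemma. Since $\bfu$ is smooth on $\bigl(B(\bfo, 2R)\setminus\{\bfo\}\bigr)\times(0,2T)$, that lemma gives $\varepsilon_{\bfo, R_0} = \varepsilon_{\bfo, R^*}$ for every $0 < R_0 \le R$; together with the hypothesis $\varepsilon_{\bfo, R^*} > 0$ this shows that
\[
\varepsilon_{\bfo, R_0} \equiv \varepsilon^* := \varepsilon_{\bfo, R^*} > 0
\]
is a fixed positive constant independent of $R_0$. Intuitively, all of the anomalous dissipation is concentrated at the isolated singularity, so it cannot be diluted by shrinking the neighborhood. Next I would unwind the cascade condition: squaring $\tau_0 < \gamma c R_0$ and using (\ref{tau_def}), (\ref{e_def}), (\ref{E_def}), and (\ref{c_con1}) — in which the common prefactor $\frac{1}{T}\frac{1}{R_0^3}$ of $\bfe_0$ and $\varepsilon_0$ cancels — the condition becomes equivalent to
\[
\iint \tfrac{1}{2}|\bfu|^2\,\phi_0^{\delta}\,d\bfx\,dt < \gamma^2 c^2\, T\, \varepsilon^*,
\]
whose right-hand side is a fixed positive constant.

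It then remains to control the left-hand side. Since $0 \le \phi_0 \le 1$ is supported in $B(\bfo, 2R_0)\times(0,2T)$, I would bound
\[
0 \le \iint \tfrac{1}{2}|\bfu|^2\,\phi_0^\delta\,d\bfx\,dt \le \iint_{B(\bfo, 2R_0)\times(0,2T)} \tfrac{1}{2}|\bfu|^2\,d\bfx\,dt,
\]
the upper bound being monotone nondecreasing in $R_0$. Because $\bfu \in L^3_{loc}$ we have $|\bfu|^2 \in L^1\bigl(B(\bfo, 2R)\times(0,2T)\bigr)$, so by absolute continuity of the Lebesgue integral this upper bound tends to $0$ as $R_0 \to 0$. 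Hence there exists $R_0^*$, $0 < R_0^* \le R$, at which it falls below the threshold $\gamma^2 c^2 T \varepsilon^*$; by monotonicity the same holds for every $R_0 \le R_0^*$, which is exactly (\ref{scales_con_fin}). Theorem \ref{balls_thm} then yields the cascade on $B(\bfo, R_0)$ at every such scale.

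The main point — rather than an obstacle — is the clean rigidity/flexibility split enforced by the preceding lemma: the dissipation is scale-invariant while the energy is absolutely continuous and therefore vanishes on shrinking neighborhoods, forcing the ratio $\bfe_0/\varepsilon_0$ to zero. Once this is in place the argument is routine; the only technical care needed is the integrability $|\bfu|^2 \in L^1_{loc}$, which is guaranteed by the $L^3_{loc}$ regularity assumed throughout.
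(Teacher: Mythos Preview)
Your proof is correct and follows essentially the same approach as the paper: invoke the preceding lemma to freeze $\varepsilon_{\bfo,R_0}\equiv\varepsilon_{\bfo,R^*}>0$, rewrite the cascade condition as a fixed positive threshold on the localized energy integral, and then use local integrability of $|\bfu|^2$ to drive that integral below the threshold for small $R_0$. You have in fact supplied more detail than the paper does (the explicit $L^3_{loc}\Rightarrow|\bfu|^2\in L^1_{loc}$ step, the monotonicity in $R_0$, and the correct exponent $\phi_0^{\delta}$ from (\ref{e_def})), but the argument is the same.
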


\begin{proof}
Note that the cascade condition holds on $B(\bfo, 2R_0) \times (0,2T)$
for some $R_0$, $0 < R_0 \le R$ if and only if
\[
\iint \frac{1}{2}|\bfu|^2\phi_0 < \gamma^2c^2T \varepsilon_{\bfo, R_0}.
\]
However, by the above lemma,
\[
 \varepsilon_{\bfo, R_0} = \varepsilon_{\bfo, R^*} > 0.
\]
Since $\bfu$ is a weak solution with the locally finite energy,
the condition will hold for all sufficiently small $R_0$.
\end{proof}

As already noted in Introduction, a natural Onsager critical space
in this setting is $(L^3_t L^{4.5}_{\bfx})_{loc}$ (cf. \cite{Shvy09});
hence, a natural class of weak solutions to be considered here is
$(L^3_t L^\alpha_{\bfx})_{loc}, \, 3 \le \alpha < 4.5$.



\begin{thebibliography} {00000000}



\bibitem[On49]{On49}
L. Onsager, Nuovo Cimento (9), 6 (Supplemento, 2 (Convegno
Internazionale di Meccanica Statistica)), 279 (1949).

\bibitem[Sc77]{Sc77}
V. Scheffer \emph{Hausdorff measure and the Navier-Stokes equations.}
Comm. Math. Phys. \textbf{55} 97 (1977).

\bibitem[CKN82]{CKN82}
L. Caffarelli, R. Kohn and L. Nirenberg \emph{Partial regularity of suitable
weak solutions of the Navier-Stokes equations.} Comm. Pure Appl. Math
\textbf{35}, 771 (1982).

\bibitem[Sc93]{Sc93}
V. Scheffer \emph{An inviscid flow with compact support in space-time.}
J. Geom. Anal. \textbf{1993}, 343 (1993).

\bibitem[E94]{E94}
G. Eyink \emph{Energy dissipation without viscosity in ideal hydrodynamics. I.
Fourier analysis and local energy transfer.} Phys. D \textbf{78}, 222 (1994).

\bibitem[CET94]{CET94}
P. Constantin, W. E and E. Titi \emph{Onsager's conjecture on the energy
conservation for solutions of Euler's equation.} Comm. Math.
Phys. \textbf(165), 207
(1994).

\bibitem[Fr95]{Fr95}
U. Frisch, Turbulence. Cambridge University Press, 1995. The legacy
of A.N. Kolmogorov.

\bibitem[PLL96]{PLL96}
P.-L. Lions, Mathematical Topics in Fluid Mechanics vol 1
Incompressible Models. Oxford: Clarendon, 1996.

\bibitem[Shn97]{Shn97}
A. Shnirelman \emph{On the nonuniqueness of weak solution of the Euler
equation.} Comm. Pure Appl. Math. \textbf{50}, 1261 (1997).

\bibitem[Shn00]{Shn00}
A. Shnirelman \emph{Weak solutions with decreasing energy of
incompressible Euler equations.} Comm. Math. Phys. \textbf{210}, 541 (2000).

\bibitem[DR00]{DR00}
J. Duchon and R. Robert \emph{Inertial energy dissipation for weak solutions of
incompressible Euler and Navier-Stokes equations.} Nonlinearity \textbf{13},
249 (2000).

\bibitem[ES06]{ES06}
G. Eyink and K. Sreenivasan \emph{Onsager and the theory of
hydrodynamic turbulence.} Rev. Mod. Phy. \textbf{78}, 87 (2006)

\bibitem[CCFS08]{CCFS08}
A. Cheskidov, P. Constantin, S. Friedlander and R. Shvydkoy
\emph{Energy conservation and Onsager's conjecture for the Euler equations.}
Nonlinearity \textbf{21}, 1233 (2008).

\bibitem[CF09]{CF09}
A. Cheskidov and S. Friedlander
\emph{The vanishing viscosity limit for a dyadic model.}
Phys. D \textbf{238}, 783 (2009).

\bibitem[Shvy09]{Shvy09}
R. Shvydkoy
\emph{On the energy of inviscid singular flows.}
J. Math. Anal. Appl. \textbf{349}, 583 (2009).

\bibitem[DLS09]{DLS09}
C. De Lellis and L. Szekelyhidi Jr,
\emph{The Euler equations as a differential inclusion.}
Ann. Math. (2) \textbf{170},
1417 (2009).

\bibitem[DLS10]{DLS10}
C. De Lellis and L. Szekelyhidi Jr
\emph{On admissibility criteria for weak solutions of the Euler equations.}
Arch. Rational Mech. Anal.
\textbf{195}, 225 (2010).

\bibitem[BT10]{BT10}
C. Bardos and E. Titi \emph{Loss of smoothness and energy conserving
rough weak solutions for the $3d$ Euler equations.} Discrete Cont.
Dynamical Sysytems S
\textbf{3}, 185 (2010).

\bibitem[DG10]{DG10}
R. Dascaliuc and Z. Gruji\'c \emph{Energy cascades and flux locality
in physical scales of the 3D NSE.} Comm. Math. Phys. \textbf{305},
199 (2011).



\end{thebibliography}
\end{document}